\documentclass[11pt]{article}
\usepackage{amsmath,amsthm,amsfonts,amssymb,amscd, amsxtra}
\usepackage{color}
\usepackage{setspace} 
\usepackage{graphicx}
\usepackage{subfig}
\oddsidemargin=0pt \evensidemargin=0pt \textwidth=6.5in

\headsep=1cm


\newtheorem{theorem}{Theorem}
\newtheorem{lemma}[theorem]{Lemma}
\newtheorem{definition}{Definition}

\newtheorem{remark}{Remark}

\newtheorem{algorithm}{Algorithm}


\begin{document}
\title{Inexact Newton method with feasible inexact projections for solving constrained smooth and  nonsmooth equations}

\author{
    F.R. de Oliveira
    \thanks{Instituto de Matem\'atica e Estat\'istica, Universidade Federal de Goi\'as, Campus II- Caixa
    Postal 131, CEP 74001-970, Goi\^ania-GO, Brazil. (E-mails: {\tt
       rodriguesfabiana@discente.ufg.br} (\textbf{Corresponding Author}) and {\tt orizon@ufg.br}). The authors were supported in part by CNPq grants 408151/2016-1 and 302473/2017-3,  FAPEG/GO, and CAPES.}
  \and O. P. Ferreira  \footnotemark[1]
}
 \maketitle
\begin{abstract}
In this paper, we propose a new method that combines the inexact Newton method with a procedure to obtain a feasible inexact projection for solving constrained smooth and nonsmooth equations. The local convergence theorems are established under the assumption of smoothness or semismoothness of the function that defines the equation and its regularity at the solution. In particular, we show that a sequence generated by the method converges to a solution with linear, superlinear, or quadratic rate, under suitable conditions. Moreover, some numerical experiments are reported to illustrate the practical behavior of the proposed method.
\end{abstract}

\noindent {{\bf Keywords:} Constrained equation; inexact Newton method; feasible inexact projection; smoothness; semismoothness; regularity.}

\noindent
{\bf  AMS Subject Classification:} 49J52, 49M15, 65H10, 90C30.

\maketitle

\section{Introduction} \label{intro}
Unconstrained nonsmooth equations  are of great interest in mathematical programming, considering that it  arises  from the reformulation of important problems, such as the nonlinear complementarity problem, the variational inequality problem, and the nonlinear programming problem. See \cite{Bertsekas1999,Facchinei2003,SteveJong1992,NocedalWright2000,PangQi19993,SolodovSvaiter1999} and references therein. Owing to the large number of applications where these equations appear, numerical techniques have been proposed to solve them. For instance, in \cite{JoseMariLiqun1995} a version of inexact Newton method was presented for solving the unconstrained nonsmooth equation
\begin{equation}\label{eq:SemsEq}
f(x)=0,
\end{equation}
where $f:{\Omega}\to {\mathbb R}^n$ is a locally Lipschitz continuous function and $\Omega\subseteq {\mathbb R}^n$ is an open set, which have  the following formal  formulation: For the current iterate $x_k \in \mathbb{R}^n$, the next iterate is any point $x_{k+1} \in \mathbb{R}^n$   satisfying the relative residual  error criteria
\begin{equation}\label{eq:CondResidual}
\|f(x_k) + V_k (x_{k+1}-x_k)\| \leq \eta_k \|f(x_k)\|,
\end{equation}
where $\eta_k \in [0,1)$ is  the relative residual  error tolerance and  $V_k$ is an element of the Clarke generalized Jacobian of $f$ at $x_k$ (for the definition of the Clarke generalized Jacobian, see  \cite{Clarke1990}). More versions of  inexact Newton-type methods for solving \eqref{eq:SemsEq}  include, but are not limited to, those in  \cite{BirginKrejicMartines2003,BonettiniTinti2007,Facchinei2003,PuTian2002,Smieta2007,MarekJS2012,Smieta2013}.

Our aim in this paper is to study  the  inexact  Newton method \eqref{eq:CondResidual} with feasible inexact projections ({\it inexact  Newton-InexP method})  for solving smooth and nonsmooth equations subject to a set of constraints, i.e., to solve the following constrained equation: Find $x \in \mathbb{R}^n$ such that
\begin{equation} \label{eq:NonEq}
x \in C, \qquad f(x) =  0,
\end{equation}
where $f:{\Omega}\to {\mathbb R}^n$ is a locally Lipschitz continuous function,  $\Omega\subseteq {\mathbb R}^n$ is an open set and  $C \subset \Omega$ is a nonempty closed convex set. If $f$ is a continuously differentiable function, then  \eqref{eq:NonEq} reduces to a constrained smooth equation, which can be easily found in the literature: see, for instance, \cite{BellaviMaria2004,Bellavia2012,Jones:2000,mariniquasi2018,morini2016}. The problem of solving \eqref{eq:NonEq} has been addressed in several studies, and several similar methods and/or variants of \eqref{eq:CondResidual}  have been proposed for solving it. See, for example, the exact/inexact Newton-like methods in  \cite{GoncalvesOliveira2017,MaxJefferson2017,mariniquasi2018}, projected Levenberg--Marquardt-type methods  in \cite{BehlingHaeserRamosSchonefeld2017,BehlingFischerHerrichIusemYe2014}, and  trust-region methods in \cite{BellaviMaria2004,Bellavia2012}. In particular,  the method proposed in \cite{BehlingFischerHerrichIusemYe2014} combines a  Levenberg--Marquardt-type method with an inexact projection, which also accepts an infeasible inexact projection.

In the present  paper, we  propose a scheme for solving  \eqref{eq:NonEq}, which we call the {\it inexact  Newton-InexP method}, that also uses the concept of inexact projection. However, inexact projections used in this scheme  are always feasible. In essence, the proposed method combines the inexact Newton method   with  a procedure to obtain a feasible inexact  projection onto  $C$ and thus  ensure the viability of the iterates. The concept of a feasible inexact projection used was introduced in \cite{deOliveiraFerreiraSilva2018}, which  also accepts an exact projection that  can be adopted  when it is easily obtained. For instance, the exact projections onto a box constraint or Lorentz cone are very easily   obtained; see \cite[p. 520]{NocedalWright2000} and \cite[Proposition 3.3]{FukushimaTseng2002}, respectively. It is worth mentioning that a feasible inexact projection can be computed by any method  that minimizes a quadratic function on closed convex set efficiently by introducing a suitable error criteria. For instance, if the set $C$ is polyhedral, then some iterations of an interior point method or  active set method can be performed  for finding a feasible inexact projection, see \cite{NicholasPhilippe2002,NocedalWright2000,Robert1996}. When $ C $ is a simple convex compact set,  a similar scheme was also adopted  in \cite{GoncalvesOliveira2017,MaxJefferson2017,lan2016}, which used the conditional gradient  method to find a feasible inexact projection. An issue to consider is the inexact solution in \eqref{eq:CondResidual}, which  has  an advantage over the exact solution,  see \cite{DemboRonTrond1983}. This advantage appears more explicitly in practical implementations of the method,  because finding the exact solution of  linear approximations of \eqref{eq:SemsEq} can be computationally very expensive for large-scale problems. Thus,  in the present  paper,  we consider that  from  the  current  iterate,  the next iterate is any point in $C$ satisfying  the  relative residual error  criteria \eqref{eq:CondResidual}. Finally, when $C = {\mathbb R}^n$, the inexact Newton-InexP method becomes the classical inexact Newton method applied to nonsmooth  equations, see  \cite{JoseMariLiqun1995}.

From the theoretical viewpoint, i.e.,  in the convergence analysis presented, to guarantee local efficiency of the proposed method,  we assume appropriate assumptions, such as  regularity and semismoothness. Under the regularity assumption, we ensure that locally a sequence generated by the method is well-defined. The  semismoothness assumption is of particular interest owing to the key role it plays in the convergence of our method; in particular, this property is essential for fast local convergence. To illustrate the robustness and efficiency of the new method, we present some preliminary numerical experiments of the proposed method for solving the constrained absolute value equation (CAVE). We also compare the performance of the proposed  method with the inexact Newton method with feasible exact projections.

This paper is organized as follows. In Section~\ref{sec:int.1}, we present the notations and some technical definitions that are used throughout the paper. In Section~\ref{Sec:LocalAnalysisInexact}, we describe  the  inexact Newton-InexP method and we study its local convergence properties. In Section~\ref{sec:specas}, we present two applications of the main convergence theorems. Numerical experiments are presented in
Section~\ref{NunEx}. We finish the paper with some remarks in Section~\ref{Sec:Conclusions}.

\section{Notations and definitions} \label{sec:int.1}
In this section, we present  some notations, definitions, and results used throughout the paper. For further details, see, for example, \cite{Clarke1990,DontchevRockafellar2010Book,Facchinei2003}.

Let $ B_{\delta}(x) := \{ y \in {\mathbb R}^n: ~\|x-y\|<\delta \}$ be   the {\it open ball} of radius $\delta > 0$ centered at $x$.
The {\it norm} of a linear mapping $A:{\mathbb R}^n \to {\mathbb R}^n$ is denoted by $\|A\|:=\sup \,\{\|A x\|:~\|x\| \leq 1 \}$.  In the following, we define the concept of a locally Lipschitz continuous function, which plays an important role in our study.
\begin{definition}\label{def:LipsCont}
Let $\Omega \subset \mathbb{R}^n$.  A function $f: \Omega  \to \mathbb{R}^m$  is said to be locally Lipschitz continuous if for each $x\in \Omega$, there exist constants $\alpha> 0$ and $\delta> 0$ such that  $\|f(y) - f(z)\| \leq \alpha \|y - z\|$, for all $y, z \in B_{\delta}(x)$.
\end{definition}
\begin{remark}
According to the Rademacher theorem, see \cite[Theorem~2, p.~81]{Evans1992}, locally Lipschitz continuous functions are differentiable almost everywhere.
\end{remark}

Now, we define the Clarke generalized Jacobian of a function, which has appeared in \cite{Clarke1990}. This Jacobian requires only local  Lipschitz continuity of the function $f$  and its well-definedness is ensured by the Rademacher theorem.

\begin{definition}\label{def:JacClarke}
The Clarke generalized Jacobian of a locally Lipschitz continuous function $f$ at $x$ is a set-valued mapping $\partial f: \mathbb{R}^n \rightrightarrows \mathbb{R}^m$ defined as
$$
\partial f(x) := \mbox{co}\left\{H \in \mathbb{R}^{m\times n}:~ \exists \, \{x_k\} \subset \mathcal{D}_f, \lim_{k \to +\infty} x_k = x,\, H = \lim_{k \to + \infty}f'(x_k)\right\},
$$
where ``\mbox{co}'' represents the convex hull, $\mathbb{R}^{m\times n}$ is the set consisting of all $m\times n$ matrices, and  $\mathcal{D}_f$ denotes  the set of points at which  $f$ is differentiable.
\end{definition}

\begin{remark}
It is worth mentioning that if $f$ is continuously differentiable at $x$, then $\partial f(x) = \{f'(x)\}$. Otherwise, $\partial f(x)$ could contain other elements different from $f'(x)$, even if $f$ is differentiable at $x$, see \cite[Example 2.2.3]{Clarke1990}. Furthermore, the Clarke generalized Jacobian is a subset of $\mathbb{R}^{m\times n}$ that is nonempty, convex, and compact in the usual sense. We also remind that the set-valued mapping $\partial f$  is closed and upper semicontinuous, see \cite[Proposition 2.6.2, p.~70]{Clarke1990}.
\end{remark}

\section{Inexact Newton-InexP method} \label{Sec:LocalAnalysisInexact}
In this section, we present the inexact Newton-InexP method for solving the problem \eqref{eq:NonEq}, where the function $f$ is locally Lipschitz  continuous.  Basically, the inexact Newton-InexP method combines the  inexact version of Newton method (see, for instance, \cite{Facchinei2003,JoseMariLiqun1995}) with a procedure to obtain a feasible inexact projection.   We begin  by presenting the concept of a feasible inexact projection.

\begin{definition} \label{def:InexactProj}
Let $C\subset {\mathbb R}^n$ be a closed convex set,  $x\in C$, and $\theta\geq 0$.  The {\it feasible inexact projection mapping}  relative to  $x$ with  error tolerance $\theta$, denoted by  $P_C(\cdot,x,\theta):  {\mathbb R}^n \rightrightarrows C$, is the set-valued mapping defined as follows
$$
P_C(y,x,\theta):=\left\{ w\in C: ~  \left\langle y-w, z-w \right\rangle \leq \theta \|y-x\|^2, \quad \forall~z\in C  \right\}.
$$
Each  point $w\in P_C(y,x,\theta) $ is  called a {\it feasible inexact projection of $y$ onto $C$ with respect to  $x$  and  error tolerance $\theta$}.
\end{definition}

As $C\subset {\mathbb R}^n$ is  a closed convex set,  \cite[Proposition~2.1.3, p. 201]{Bertsekas1999} implies that  for each $y\in {\mathbb R}^n$  and $x\in C$  we have   $\{P_{C}(y)\}=P_C(y,x,0) \subset   P_C(y,x,\theta)$, where  $P_{C}$ denotes the exact projection mapping. Hence, $P_C(y,x,\theta)\neq \varnothing$,  for all  $y\in {\mathbb R}^n$ and $x\in C$, and consequently the mapping $P_C(\cdot,x,\theta):  {\mathbb R}^n \rightrightarrows C$ is well-defined.
\begin{remark}
It is worth mentioning that the conditional gradient procedure (\textit{CondG procedure}; see, for instance, \cite{FrankWolfe1956,lan2016}), which is  based on the conditional gradient method, is an  example of  the procedure for obtaining feasible inexact projections onto  special compact  sets $C$. For a general overview of this method, see \cite{Bertsekas1999}. The concept of inexact projection has been considered before;  see, for example, \cite{BehlingFischerHerrichIusemYe2014}.  We remark that, in general, those inexact  projections are infeasible and, thus, different from the above concept.
\end{remark}

The next result plays an important role in the remainder of this paper. It  presents an important  property of the feasible inexact projection.  It is worth noting that   it  is a generalization  of \cite[Lemma 4]{MaxJefferson2017}  for a general  feasible inexact projection.
\begin{lemma}  \label{pr:condi}
Let  $y, {\tilde y}\in {\mathbb R}^n$, $x, {\tilde x} \in C$, and $\theta \geq 0$. Then, for any  $w \in  P_C(y, x, \theta) $, we have
$$
\left\|w -  P_C({\tilde y}, {\tilde x}, 0)\right\| \leq  \|y-{\tilde y}\|+ \sqrt{2\theta}\|y-x\|.
$$
\end{lemma}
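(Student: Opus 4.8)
The plan is to exploit the fact that both the inexact projection $w$ and the exact projection $\bar w := P_C(\tilde y)$ lie in $C$ (recall from the remark after Definition~\ref{def:InexactProj} that $P_C(\tilde y,\tilde x,0)=\{P_C(\tilde y)\}$, a singleton independent of $\tilde x$), so that each of these two points may be substituted as the free test vector $z$ in the variational inequality satisfied by the other. Concretely, $w\in P_C(y,x,\theta)$ gives, upon setting $z=\bar w\in C$, the inequality $\langle y-w,\bar w-w\rangle\le\theta\|y-x\|^2$, while the exact-projection characterization of $\bar w$ via \cite[Proposition~2.1.3]{Bertsekas1999}, namely $\langle \tilde y-\bar w,z-\bar w\rangle\le 0$ for all $z\in C$, gives, upon setting $z=w\in C$, the inequality $\langle \tilde y-\bar w,w-\bar w\rangle\le 0$.

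Next I would expand the target quantity $\|w-\bar w\|^2=\langle w-\bar w,\,w-\bar w\rangle$ by writing $w-\bar w=(w-y)+(y-\tilde y)+(\tilde y-\bar w)$ and distributing the inner product over this splitting. This produces exactly three terms: the term $\langle w-y,\,w-\bar w\rangle$, which equals $\langle y-w,\,\bar w-w\rangle$ and is therefore bounded by $\theta\|y-x\|^2$ from the first inequality; the term $\langle \tilde y-\bar w,\,w-\bar w\rangle$, which is $\le 0$ by the second inequality; and the cross term $\langle y-\tilde y,\,w-\bar w\rangle$, which Cauchy--Schwarz bounds by $\|y-\tilde y\|\,\|w-\bar w\|$. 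Collecting these yields the scalar estimate $\|w-\bar w\|^2\le \theta\|y-x\|^2+\|y-\tilde y\|\,\|w-\bar w\|$.

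Finally, setting $t:=\|w-\bar w\|$ this reads $t^2-\|y-\tilde y\|\,t-\theta\|y-x\|^2\le 0$, a quadratic inequality in $t$ with nonnegative coefficients, so $t$ is at most the larger root $\tfrac12\bigl(\|y-\tilde y\|+\sqrt{\|y-\tilde y\|^2+4\theta\|y-x\|^2}\bigr)$. Applying the elementary bound $\sqrt{\alpha^2+\beta^2}\le\alpha+\beta$ for $\alpha,\beta\ge0$ with $\alpha=\|y-\tilde y\|$ and $\beta=2\sqrt{\theta}\,\|y-x\|$ gives $t\le \|y-\tilde y\|+\sqrt{\theta}\,\|y-x\|$, which is in fact slightly sharper than, and hence implies, the asserted bound $\|y-\tilde y\|+\sqrt{2\theta}\,\|y-x\|$.

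The only real subtlety -- the step I expect to require the most care -- is choosing the three-way decomposition of $w-\bar w$ so that each resulting inner product is controlled by a single one of the two variational inequalities or by Cauchy--Schwarz; once both inequalities are written down, the remainder is essentially forced. An equivalent and arguably cleaner route would be to first treat the case $\tilde y=y$, where the same pairing of test points gives directly $\|w-P_C(y)\|\le\sqrt{\theta}\,\|y-x\|$, and then pass to general $\tilde y$ through the triangle inequality together with the nonexpansiveness $\|P_C(y)-P_C(\tilde y)\|\le\|y-\tilde y\|$ of the exact projection; this variant avoids the quadratic inequality altogether.
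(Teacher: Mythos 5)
Your argument is correct, and it rests on the same two key inequalities as the paper's proof: taking $w\in P_C(y,x,\theta)$ and $\bar w=P_C(\tilde y,\tilde x,0)$ and testing each variational inequality at the other point to get $\langle y-w,\bar w-w\rangle\le\theta\|y-x\|^2$ and $\langle \tilde y-\bar w,w-\bar w\rangle\le 0$. Where you diverge is in how these are combined. The paper expands $\|y-\tilde y\|^2$ around the splitting $(y-\tilde y)=\bigl[(y-w)-(\tilde y-\bar w)\bigr]+(w-\bar w)$, discards the nonnegative square $\|(y-w)-(\tilde y-\bar w)\|^2$, and lands on $\|w-\bar w\|^2\le\|y-\tilde y\|^2+2\theta\|y-x\|^2$, from which $\sqrt{a^2+b^2}\le a+b$ gives the stated bound with $\sqrt{2\theta}$. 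You instead expand $\|w-\bar w\|^2$ directly via the three-way decomposition, apply Cauchy--Schwarz to the cross term, and solve the resulting quadratic inequality $t^2\le\|y-\tilde y\|\,t+\theta\|y-x\|^2$ in $t=\|w-\bar w\|$. This buys you a strictly sharper constant, $\|y-\tilde y\|+\sqrt{\theta}\,\|y-x\|$ rather than $\|y-\tilde y\|+\sqrt{2\theta}\,\|y-x\|$, at the cost of the (routine) quadratic-root step; your alternative route via the case $\tilde y=y$ plus nonexpansiveness of the exact projection achieves the same sharper constant even more transparently. Since all of the paper's subsequent estimates only use the lemma through the factor $\sqrt{2\theta_k}$, the improvement would propagate to slightly weaker hypotheses in \eqref{eq:DeltaEtaT1} and \eqref{eq:EtaT2}--\eqref{eq:DeltaT2}, though it changes nothing qualitatively. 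One minor point to make explicit in a written version: the identification $P_C(\tilde y,\tilde x,0)=\{P_C(\tilde y)\}$, which you correctly cite from the remark following Definition~\ref{def:InexactProj}, is what guarantees that $\bar w$ satisfies the exact-projection variational inequality against \emph{every} $z\in C$, in particular against $z=w$.
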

\begin{proof}
To simplify the notation, we set ${\tilde w} = P_C({\tilde y}, {\tilde x}, 0)$, and take $w \in P_C(y, x, \theta)$. First, note that
$$\|y - {\tilde y}\|^2 = \|(y-w) - ({\tilde y} - {\tilde w})\|^2 + \|w - {\tilde w}\|^2 + 2 \langle (y -{\tilde y}) - (w- {\tilde w}), w - {\tilde w} \rangle,$$ which implies that
$$
 \|w - {\tilde w}\|^2 \leq \|y - {\tilde y}\|^2 + 2 \langle y - w, {\tilde w}-w \rangle + 2 \langle {\tilde y} - {\tilde w}, w - {\tilde w} \rangle.
 $$
Because ${\tilde w} = P_C({\tilde y}, {\tilde x}, 0)$ and $w \in P_C(y, x, \theta)$, by using Definition~\ref{def:InexactProj} and the fact that ${\tilde w}, w\in C$, we can conclude that $\langle y-w,\, {\tilde w} - w\rangle \leq \theta \|y - x\|^2$ and $ \langle {\tilde y}-{\tilde w},\, w - {\tilde w}\rangle \leq 0$. Thus, the combination of these three previous inequalities yields $\|w - {\tilde w}\|^2 \leq \|y - {\tilde y}\|^2 + 2 \theta \|y - x\|^2$, and then $\|w - {\tilde w}\| \leq \|y - {\tilde y}\|+ \sqrt{2 \theta} \|y - x\|$, giving the desired inequality.
\end{proof}

In this section, we assume that  $f: \Omega \to {\mathbb R}^n$ is a locally Lipschitz continuous function.  Now, we formally describe the algorithm for solving the problem~\eqref{eq:NonEq}.
\noindent
\\
\hrule
\begin{algorithm}\label{Alg:INP}
{\bf Inexact Newton-InexP method\\}
\hrule
\begin{description}
\item[ \textbf{Step 0.}] Let $ {\theta}>0$,  $ {\eta}>0$,  $x_0\in C$,  $\{\theta_k\}\subset [0, {\theta})$, and $\{\eta_k\}\subset [0, {\eta})$ be given and set $k=0$.
\item[ \textbf{Step 1.}] If $f(x_k) = 0$, then {\bf stop}; otherwise, choose an element $V_k \in \partial f(x_k)$ and compute $y_k\in{\mathbb R}^n$ such that
\begin{equation}\label{eq:semsnew}
\|f(x_k)+V_k(y_k-x_k)\|\leq \eta_k \|f(x_k)\|.
\end{equation}
\item[ \textbf{Step 2.}]  If $y_k \in C$, set $x_{k+1} = y_k$; otherwise, use a procedure to obtain $P_C(y_k, x_k, \theta_k)\in C$ a feasible inexact projection of $y_k$ onto $C$ relative to  $x_k$ with  relative error tolerance $\theta_k$; and set
\begin{equation*} \label{eq:semscond}
x_{k+1} \in P_C\left(y_{k},x_k,\theta_k\right).
\end{equation*}
\item[ \textbf{Step 3.}]  Set $k\gets k+1$, and go to \textbf{Step~1}.
\end{description}
\hrule
\end{algorithm}
\noindent

In the following, we describe the main features of the inexact Newton-InexP method.

\begin{remark}
In inexact Newton-InexP method, we first check whether the current iterate $x_k$ is a solution of the problem \eqref{eq:NonEq}; otherwise, we compute $y_k$ satisfying the relative residual error criteria~\eqref{eq:semsnew}. The forcing sequence $\{\eta_k\}$ is used to control the level of accuracy. In particular, as we show, the specific choice of this sequence is essential to establish the local convergence of the inexact Newton-InexP method. It is worth pointing out that if $\eta_k = 0$ for all $k = 0,1, \ldots$, i.e., the exact version of the Newton-InexP method, then $y_k$  is obtained by solving the system $f(x_k)+V_k(y_k-x_k) = 0$. Note that to ensure the well-definedness of $y_k$ the Clarke generalized Jacobian must be nonempty, see \cite[Proposition~2.6.2, p. 70]{Clarke1990},  and all $V_k\in \partial f(x_k)$ must be nonsingular, for any $k = 0, 1, \ldots$. As the point $y_k$  can be infeasible for the set of constraints $C$, the inexact Newton-InexP method uses a procedure to obtain a feasible inexact projection, and consequently the new iterate $x_{k+1}$ belongs to $C$. The choice of the tolerance $\theta_k$ is also important in obtaining the local convergence of the inexact Newton-InexP method. Finally, we remark that  if $f$ is continuously differentiable, $\eta_k = 0$ for all $k = 0,1, \ldots$, and the procedure to obtain $P_C(y_k, x_k, \theta_k)$ is the CondG procedure, then  our method is equivalent to the method proposed in \cite{MaxJefferson2017}. On the other hand, if $C = {\mathbb R}^n$ and $\eta_k = \theta_k = 0$ for all $k = 0,1, \ldots$,  our method reduces to Newton method proposed in \cite{Qi1993}.
\end{remark}

In the following, we state and prove our first local convergence result for a sequence generated by the inexact Newton-InexP method.
\begin{theorem}\label{th:conv}
Let   $\Omega \subseteq {\mathbb R}^n$ be an open set, $C\subset\Omega$ be  a closed convex set, and  $f: \Omega \to {\mathbb R}^n$ be a locally Lipschitz continuous function.  Suppose   that ${\bar x}\in C$ and   $f({\bar x}) = 0$. Let  $\Gamma>0$ and $0<r \leq {\bar r}:=\sup\left\{t\in {\mathbb R}:~ B_t({\bar x})\subset \Omega\right\}$ such that
\begin{equation} \label{eq:gamma}
\|f(x) - f({\bar x})\| \leq \Gamma \|x - {\bar x}\|, \qquad \forall~x\in  B_{r}({\bar x}).
\end{equation}
Assume that each $ { V_{\bar x}}\in \partial f({\bar x})$ is nonsingular and  let   $\lambda_{\bar x}\geq \max\{\|{V_{\bar x}^{-1}}\|: ~ { V_{\bar x}}\in \partial f({\bar x})\}$. Moreover,   there exist  $\epsilon>0$ and $0<\delta\leq \min\{ r, 1\}$ such that for  all $x\in   B_{\delta}({\bar x})$,  $V_{x}\in \partial f(x)$ is nonsingular and  the following hold:
\begin{eqnarray}
                                                                                                \displaystyle \|V_{x}^{-1}\| &\leq&  \frac{\lambda_{{\bar x}}}{1 -\epsilon \lambda_{{\bar x}}},   \label{eq:fcA1}    \\
     \displaystyle \|f({\bar x})-f(x)-V_{x}({\bar x}-x)\|&\leq& \epsilon \|x - {\bar x}\|^{1+\mu},  \qquad 0\leq \mu \leq 1.\label{eq:scA1}
\end{eqnarray}
Let   $0< {\theta}<1/2$. Furthermore,  assume that   ${\eta}> 0$ and  $\epsilon>0$  satisfy the following conditions
\begin{equation} \label{eq:DeltaEtaT1}
{\eta}<\frac{1-\sqrt{2{\theta}}}{{\lambda_{{\bar x}}}\Gamma\left(1+\sqrt{2{\theta}}\right)}, \qquad
\epsilon< \displaystyle \frac{1}{2\lambda_{{\bar x}}} \left[\left(1-\sqrt{2{\theta}}\right)-{\eta} {\lambda_{{\bar x}}}\Gamma\left(1+\sqrt{2{\theta}}\right)\right].
 \end{equation}
Then,   every sequence $\{x_k\}$  generated by Algorithm~\ref{Alg:INP} starting in $x_0 \in C \cap B_\delta({\bar x})\backslash \{{\bar x}\}$, with $0\leq\eta_k <{\eta}$ and $0\leq \theta_k< {\theta}$, for all $k=0, 1, \ldots$,  belongs to  $ B_\delta({\bar x})\cap C$,  satisfies
\begin{equation} \label{eq:BoundIneT1}
\|x_{k+1} - {\bar x}\|  \leq \left[\frac{\lambda_{\bar x}[\eta_k\Gamma + \epsilon \|x_k - {\bar x}\|^{\mu}]}{1- \epsilon{\lambda_{{\bar x}}} }\left(1+\sqrt{2\theta_k}\right) +  \sqrt{2\theta_k}\right] \|x_k-{\bar x} \|,
\end{equation}
for all $k = 0, 1, \ldots$, and converges $Q$-linearly to ${\bar x}$. As a consequence, if $\lim_{k \to +\infty} \theta_k = 0$ and $\lim_{k \to +\infty} \eta_k = 0$, then $\{x_k\}$ converges $Q$-superlinearly    to ${\bar x}$. Furthermore,    letting  $\eta_k <  \min \{ {\eta}\|f(x_k)\|^{\mu}, \eta\}$  and $\theta_k < \min \{{\theta}\|f(x_k)\|^{2\mu}, \theta\}$,  the convergence of $\{x_k\}$ to ${\bar x}$ is of the order of $1 + \mu$.
\end{theorem}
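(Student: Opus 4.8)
The plan is to prove the theorem by an induction on $k$ that simultaneously establishes (a) that each iterate stays in $B_\delta(\bar x)\cap C$ so the next Newton step is well-defined, and (b) the one-step contraction estimate \eqref{eq:BoundIneT1}, from which linear, superlinear, and order-$(1+\mu)$ convergence follow in turn. The heart of the matter is a single inequality bounding $\|x_{k+1}-\bar x\|$; everything else is bookkeeping on the forcing parameters.

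\textbf{Core one-step estimate.} Fix $k$ and assume $x_k\in B_\delta(\bar x)\cap C$ with $x_k\neq\bar x$. The key observation is that $\bar x$ is the exact projection of itself, i.e.\ $\bar x=P_C(\bar x,\bar x,0)$, since $\bar x\in C$. I would therefore apply Lemma~\ref{pr:condi} with $y=y_k$, $\tilde y=\bar x$, $x=x_k$, $\tilde x=\bar x$, and $\theta=\theta_k$, giving
\begin{equation*}
\|x_{k+1}-\bar x\|=\|x_{k+1}-P_C(\bar x,\bar x,0)\|\leq \|y_k-\bar x\|+\sqrt{2\theta_k}\,\|y_k-x_k\|.
\end{equation*}
(This covers the case $y_k\notin C$; when $y_k\in C$ we have $x_{k+1}=y_k$ and the same bound holds trivially with $\theta_k=0$.) The next task is to estimate $\|y_k-\bar x\|$. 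Writing $V_k(y_k-\bar x)=[f(x_k)+V_k(y_k-x_k)]-f(x_k)+V_k(x_k-\bar x)$ and using $f(\bar x)=0$, I multiply through by $V_k^{-1}$ to obtain
\begin{equation*}
\|y_k-\bar x\|\leq \|V_k^{-1}\|\big(\|f(x_k)+V_k(y_k-x_k)\|+\|f(\bar x)-f(x_k)-V_k(\bar x-x_k)\|\big).
\end{equation*}
Now I invoke the residual criterion \eqref{eq:semsnew}, the regularity bound \eqref{eq:scA1}, the Lipschitz estimate \eqref{eq:gamma} (to replace $\|f(x_k)\|=\|f(x_k)-f(\bar x)\|\leq\Gamma\|x_k-\bar x\|$), and the inverse bound \eqref{eq:fcA1}. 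Combining these gives
\begin{equation*}
\|y_k-\bar x\|\leq \frac{\lambda_{\bar x}}{1-\epsilon\lambda_{\bar x}}\big(\eta_k\Gamma+\epsilon\|x_k-\bar x\|^\mu\big)\|x_k-\bar x\|.
\end{equation*}
Finally $\|y_k-x_k\|\leq\|y_k-\bar x\|+\|x_k-\bar x\|$ lets me fold the $\sqrt{2\theta_k}\|y_k-x_k\|$ term into the same shape, and collecting the coefficients yields exactly \eqref{eq:BoundIneT1}.

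\textbf{From the estimate to convergence.} Let $q_k$ denote the bracketed contraction factor in \eqref{eq:BoundIneT1}. Using $\|x_k-\bar x\|^\mu\leq\delta^\mu\leq 1$ (since $\delta\leq 1$) together with $\eta_k<\eta$ and $\theta_k<\theta$, I show $q_k<1$; this is precisely where the two smallness conditions in \eqref{eq:DeltaEtaT1} are engineered—the bound on $\eta$ guarantees positivity of $(1-\sqrt{2\theta})-\eta\lambda_{\bar x}\Gamma(1+\sqrt{2\theta})$, and the bound on $\epsilon$ ensures the remaining $\epsilon$-contribution does not exceed this margin, so that $q_k$ is bounded by some $q<1$ uniform in $k$. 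This simultaneously closes the induction (contraction by $q<1$ keeps $x_{k+1}\in B_\delta(\bar x)$, and convexity keeps it in $C$) and delivers $Q$-linear convergence. For the superlinear claim, once $\eta_k\to 0$ and $\theta_k\to 0$ the factor $q_k\to 0$ because every term in the bracket vanishes (the $\epsilon\|x_k-\bar x\|^\mu$ piece tends to zero along the convergent sequence). For the order-$(1+\mu)$ rate I substitute the sharpened choices $\eta_k<\eta\|f(x_k)\|^\mu$ and $\theta_k<\theta\|f(x_k)\|^{2\mu}$; then $\sqrt{2\theta_k}\lesssim\|f(x_k)\|^\mu\leq(\Gamma\|x_k-\bar x\|)^\mu$ and $\eta_k\lesssim\|f(x_k)\|^\mu\lesssim\|x_k-\bar x\|^\mu$, so each term in the bracket of \eqref{eq:BoundIneT1} is $O(\|x_k-\bar x\|^\mu)$, forcing $\|x_{k+1}-\bar x\|\leq M\|x_k-\bar x\|^{1+\mu}$.

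\textbf{Main obstacle.} The genuinely delicate point is verifying that $q_k<1$ strictly and uniformly from the parameter constraints \eqref{eq:DeltaEtaT1}: the factor mixes $\eta_k$, $\theta_k$, $\epsilon$, and the running distance $\|x_k-\bar x\|^\mu$ in a coupled way, and the $\sqrt{2\theta_k}$ term appears both multiplied by the Newton contribution and as a standalone additive term. I would handle this by bounding $\sqrt{2\theta_k}<\sqrt{2\theta}$ and $\|x_k-\bar x\|^\mu\leq 1$ up front to reduce $q_k$ to a constant expression, then show that expression is $<1$ exactly when \eqref{eq:DeltaEtaT1} holds—essentially rearranging the $\epsilon$-inequality in \eqref{eq:DeltaEtaT1} back into the form $q<1$. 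Care is also needed to confirm $1-\epsilon\lambda_{\bar x}>0$ (so that \eqref{eq:fcA1} is meaningful and the denominator is positive), which follows from the $\epsilon$ bound since its right-hand side is at most $1/(2\lambda_{\bar x})$.
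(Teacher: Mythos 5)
Your one-step estimate, the use of Lemma~\ref{pr:condi} with $\tilde y=\tilde x=\bar x$, the induction keeping the iterates in $B_\delta(\bar x)\cap C$, the verification that the contraction factor is uniformly below $1$ by rearranging the $\epsilon$-inequality in \eqref{eq:DeltaEtaT1}, and the order-$(1+\mu)$ substitution all match the paper's proof essentially line for line; that part of the proposal is correct (and your direct geometric-decay argument for $Q$-linear convergence is in fact cleaner than the paper's limit argument ${\bar t}\leq q\,{\bar t}$).

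There is, however, one genuine gap, in the $Q$-superlinear claim. You assert that when $\eta_k\to0$ and $\theta_k\to0$ ``every term in the bracket vanishes'' because ``the $\epsilon\|x_k-\bar x\|^{\mu}$ piece tends to zero along the convergent sequence.'' The theorem allows $\mu=0$ in \eqref{eq:scA1}, and for $\mu=0$ that piece equals $\epsilon$ identically, so the bracket in \eqref{eq:BoundIneT1} tends to $\epsilon\lambda_{\bar x}/(1-\epsilon\lambda_{\bar x})>0$, not to $0$; your argument as written only establishes superlinear convergence for $0<\mu\leq1$. The paper closes this case by observing that the hypotheses \eqref{eq:fcA1}--\eqref{eq:scA1} are assumed to hold for \emph{every} sufficiently small $\epsilon>0$ (with $\delta$ depending on $\epsilon$, which is harmless once $x_k\to\bar x$), so that $\limsup_k\|x_{k+1}-\bar x\|/\|x_k-\bar x\|\leq\epsilon\lambda_{\bar x}/(1-\epsilon\lambda_{\bar x})$ for all such $\epsilon$ and hence the limit is $0$. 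This ``$\epsilon$ arbitrary'' step is exactly what is needed for the semismooth ($\mu=0$) application in Theorem~\ref{th:mainss}, and it is missing from your write-up.
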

\begin{proof}
We show by induction on $k$ that if $x_0 \in C \cap B_\delta({\bar x})\backslash \{{\bar x}\}$,  then every  sequence $\{x_k\}$ generated by  Algorithm~\ref{Alg:INP} belongs to $B_\delta({\bar x})\cap C$ and  satisfies  \eqref{eq:BoundIneT1}. Indeed, set  $k = 0$,  and  take $\theta_0\geq 0$, $\eta_0\geq 0$, $x_{0} \in  C \cap B_{\delta}({\bar x})\backslash \{{\bar x}\}$ and $V_{x_0} \in \partial f(x_0)$. Owing to $\|x_0 - {\bar x}\|< \delta$, we have  $V_{x_0}$ is nonsingular and  then $y_0$  in \eqref{eq:semsnew} is well-defined for $k=0$. As $f({\bar x}) = 0$, we have
$$
y_0 - {\bar x} = V_{x_0}^{-1}\left(\left[f(x_0)+V_{x_0}(y_0 -x_0)\right] + \left[f({\bar x})-f(x_0) - V_{x_0}( {\bar x}-x_0)\right]\right).
$$
Taking the norm on both sides of the last inequality and using the triangular inequality,  we conclude that
$$
\|y_0 - {\bar x}\|\leq  \left\|V_{x_0}^{-1}\right\|\Big[\big\| f(x_0)+ V_{x_0}(y_0 -x_0) \big\| + \big\|f({\bar x}) - f(x_0) -V_{x_0}( {\bar x}-x_0)\big\| \Big].
$$
Thus, using \eqref{eq:semsnew}  with $k=0$ and the assumptions \eqref{eq:fcA1} and \eqref{eq:scA1} with $x=x_0$, we obtain  that
$$
\|y_0 - {\bar x}\| \leq \frac{\lambda_{\bar x}}{1 - \epsilon \lambda_{\bar x}}\left[\eta_0 \|f(x_0)\| + \epsilon\|x_0 - {\bar x}\|^{1+\mu} \right].
$$
As $f({\bar x}) = 0$,  from  \eqref{eq:gamma} we have   $\|f(x_0)\|  \leq \Gamma\|x_0 - {\bar x}\|$. Hence, the last inequality  becomes
\begin{equation}\label{eq:lim21}
\|y_0 - {\bar x}\| \leq \frac{\lambda_{\bar x}[\eta_0 \Gamma + \epsilon \|x_0 - {\bar x}\|^{\mu}]}{1 - {\epsilon\lambda_{\bar x}}}\|x_0 - {\bar x}\|.
\end{equation}
Taking any  $x_1 \in P_C(y_0, x_0, \theta_0)$ and applying Lemma~\ref{pr:condi} with $y = y_0$, $x = x_0$, $\theta = \theta_0$, $\tilde y={\bar x}$ and $\tilde x={\bar x}$, we have
$$
\left\|x_1 -{\bar x} \right\| \leq  \left\|y_0-{\bar x}\right\| + \sqrt{2\theta_0} \left\|y_0 - x_0\right\|\leq \left\|y_0-{\bar x}\right\|  \left(1 + \sqrt{2\theta_0} \right) + \sqrt{2\theta_0} \left\|x_0-{\bar x} \right\|.
$$
Combining  \eqref{eq:lim21} with  the last inequality, we obtain that
$$
\left\|x_1-{\bar x} \right\|\leq \frac{\lambda_{\bar x}[\eta_0\Gamma + \epsilon \|x_0 - {\bar x}\|^{\mu}]}{1 - {\epsilon\lambda_{\bar x}}}\left\|x_0-{\bar x} \right\| \left(1 + \sqrt{2\theta_0}\right) + \sqrt{2\theta_0} \left\|x_0-{\bar x} \right\|,
$$
which is equivalent to \eqref{eq:BoundIneT1} with $k=0$. As $\delta \leq 1$,  $\eta_0 < \eta$, and $\theta_0 < \theta$,  by using \eqref{eq:DeltaEtaT1}, we obtain
$$
\frac{\lambda_{\bar x}[\eta_0\Gamma + \epsilon\|x_0 - {\bar x}\|^{\mu}]}{1- \epsilon{\lambda_{{\bar x}}} }\left(1+\sqrt{2\theta_0}\right) +  \sqrt{2\theta_0}< \frac{\lambda_{\bar x}[\eta\Gamma + \epsilon]}{1- \epsilon{\lambda_{{\bar x}}} }\left(1+\sqrt{2\theta}\right) +  \sqrt{2\theta} < 1.
$$
Thus, because $x_0\in B_{\delta}({\bar x})$, we obtain  from  \eqref{eq:BoundIneT1} with $k=0$  that  $\left\|x_1 -{\bar x}  \right\| <  \|x_0 -{\bar x} \|<\delta$. As $P_C(y_0, x_0, \theta_0)$ belongs to $C$ and $ x_1\in P_C\left(y_0, x_0, \theta_0\right)$,  we conclude that  $ x_1$   belongs to  $B_\delta({\bar x})\cap C$, which completes the induction step for $k=0$.   The general induction step  is completely analogous.  Therefore,  every  sequence $\{x_k\}$ generated by Algorithm~\ref{Alg:INP} is contained in $B_\delta({\bar x})\cap C$ and  satisfies  \eqref{eq:BoundIneT1}.  We proceed to prove that the sequence $\{x_k\}$ converges to ${\bar x}$. As $\delta \leq1$, $0\leq\theta_k<{\theta}$, and $0 \leq\eta_k <{\eta}$ for all $k=0, 1, \ldots$, it follows from \eqref{eq:BoundIneT1} and \eqref{eq:DeltaEtaT1} that
\begin{equation} \label{eq:Convproof12}
\|x_{k+1}- {\bar x}\| < \left[\frac{\lambda_{\bar x}[\eta\Gamma + \epsilon ]}{1- \epsilon{\lambda_{{\bar x}}} }\left(1+\sqrt{2\theta}\right) +  \sqrt{2\theta}\right]\|x_k - {\bar x}\|< \|x_k - {\bar x}\|,
\end{equation}
for all $k = 0, 1, \ldots$. This implies that the sequence $\{\|x_k - {\bar x} \|\}$ converges.  Let us say that ${\bar t}:=\lim_{k\to +\infty}\|x_k - {\bar x}\|\leq \delta$.   Thus, taking  the limit   in  \eqref{eq:Convproof12}   as  $k$ goes to $+ \infty$,  we have
$$
{\bar t} \leq \left[\frac{\lambda_{\bar x}[\eta\Gamma + \epsilon ]}{1- \epsilon{\lambda_{{\bar x}}} }\left(1+\sqrt{2\theta}\right) +  \sqrt{2\theta}\right]{\bar t},
$$
If ${\bar t}\neq 0$, then  \eqref{eq:DeltaEtaT1} implies that  $ {\bar t} < {\bar t}$, which is absurd.  Hence,  ${\bar t} = 0$ and  $\{x_k\}$ converges  $Q$-linearly to ${\bar x}$.  Now,  we assume that $\lim_{k \to +\infty} \theta_k = 0$ and $\lim_{k \to +\infty} \eta_k = 0$. Thus,    for $\mu=0$,  it  follows  from \eqref{eq:BoundIneT1}  that
$$
\lim_{k\to +\infty} \frac{\|x_{k+1}- {\bar x}\|}{\|x_{k}- {\bar x}\|}=\frac{\epsilon\lambda_{\bar x}}{1-\epsilon  \lambda_{\bar x}},
$$
and,   by  taking into account that  $\epsilon>0$  is any number  satisfying \eqref{eq:DeltaEtaT1}, we conclude that $\{x_k\}$ converges $Q$-superlinearly    to ${\bar x}$. For  $0<\mu \leq 1$ it follows straight from \eqref{eq:BoundIneT1} that   $\{x_k\}$ converges $Q$-superlinearly    to ${\bar x}$.  Finally, we assume that  $\eta_k <  \min \{ {\eta}\|f(x_k)\|^{\mu}, \eta\}$ and $\theta_k < \min \{{\theta}\|f(x_k)\|^{2\mu}, \theta\}$.   Considering that  $\{x_k\}$   belongs to  $ B_\delta({\bar x})$, $f({\bar x}) = 0$, and $\delta \leq r$, it follows from  \eqref{eq:gamma} that $\|f(x_k)\|  \leq \Gamma\|x_k - {\bar x}\|$ for all $ k = 0, 1, \ldots$. Hence, $\eta_k < \eta \Gamma^{\mu}\|x_k - {\bar x}\|^{\mu}$ and  $\theta_k < \theta \Gamma^{2\mu}\|x_k - {\bar x}\|^{2\mu}$ for all $ k = 0, 1, \ldots$. Then, \eqref{eq:BoundIneT1} implies
$$
\|x_{k+1} - {\bar x}\|  < \left[\frac{\lambda_{\bar x}[\eta\Gamma^{1+\mu} + \epsilon]}{1- \epsilon{\lambda_{{\bar x}}} }\left(1+\Gamma^{\mu}\sqrt{2\theta}\|x_k - {\bar x}\|^{\mu}\right) +  \Gamma^{\mu}\sqrt{2\theta} \right] \|x_k-{\bar x} \|^{1+\mu},
$$
for all $k = 0, 1, \ldots$. Therefore, $\{x_k\}$ converges to ${\bar x}$ with  order $1 + \mu$, and the proof of the theorem is complete.
\end{proof}

In the following remark, we present a particular case of Theorem~\ref{th:conv}, i.e., when  the projection and Newton method are exact.
\begin{remark}
Note that the mapping   $(0, 1/2)\ni \theta \mapsto (1-\sqrt{2{\theta}})/(1+\sqrt{2{\theta}})$ is decreasing. Thus, from the first inequality in \eqref{eq:DeltaEtaT1}, we conclude  that if $\theta$ approaches the upper bound $1/2$, then $\eta$ approaches the lower bound $0$. Therefore,    in Algorithm~\ref{Alg:INP}, the most inexact is the projection, the least inexact has to be the Newton direction. Moreover, it follows from \eqref{eq:BoundIneT1} that  if $\theta_k\equiv 0$ and $\eta_k\equiv 0$,   then  the convergence rate of $\{x_k\}$ has order $1 + \mu $ as follows
$$
\| x_{k+1} - {\bar x}\| \leq \frac{\lambda_{\bar x} \epsilon }{1-  \epsilon\lambda_{\bar x}} \|x_k - {\bar x}\|^{1 + \mu}, \qquad k = 0, 1, \ldots.
$$
Hence, $\epsilon$ in \eqref{eq:DeltaEtaT1} is related to the bound for  the convergence rate.
\end{remark}

Next, we state and prove our second local convergence result for a sequence generated by the inexact Newton-InexP method. In this case, we assume that $f: \Omega  \to \mathbb{R}^n$  is a continuously differentiable function.
\begin{theorem}\label{th:mainInex}
Let $ \Omega \subseteq  \mathbb{R}^n$  be an open set,  $C \subset \Omega$  be a closed convex set, and  $f: \Omega  \to \mathbb{R}^n$  be a  continuously differentiable function.  Suppose that  ${\bar x}\in C $ and   $f({\bar x}) = 0$. Let     $\Gamma>0$ and $0<r \leq {\bar r}:=\sup\left\{t\in {\mathbb R}:~ B_t({\bar x})\subset \Omega\right\}$ such that
\begin{equation} \label{eq:gamma1}
\|f(x) - f({\bar x})\| \leq \Gamma \|x - {\bar x}\|, \qquad \forall~x\in  B_{ r}({\bar x}).
\end{equation}
Assume that $  f'({\bar x})$ is nonsingular and  there exist $0< \mu \leq 1$, $L>0$, and $0<{\hat \delta}\leq r$ such that for all $x \in B_{\hat \delta}({\bar x})$,     $f'({x})$ is nonsingular and the following hold:
\begin{eqnarray}
\displaystyle \|f'(x)^{-1}\| &\leq&  \frac{ \|f'({\bar x})^{-1}\|}{1 -  L\|f'({\bar x})^{-1}\|\|x - {\bar x}\|^\mu},   \label{eq:fsecA1}    \\
\displaystyle \|f({\bar x})-f(x)-f'(x)({\bar x}-x)\|&\leq& \frac{\mu L}{1+\mu} \|x - {\bar x}\|^{1+\mu}.  \label{eq:ssecA1}
\end{eqnarray}
Furthermore,  let   $0<{\theta}<1/2$,   ${\eta}> 0$, and ${ \delta}>0$  satisfying the following  conditions
\begin{eqnarray}
 \displaystyle {\eta} &< &\frac{1-\sqrt{2{\theta}}}{\Gamma \|f'({\bar x})^{-1}\|\left(1+\sqrt{2{\theta}}\right)},   \label{eq:EtaT2}    \\
\displaystyle {\delta} &<& \min \left\{   {\hat \delta}, \left[ \frac{(1+\mu)\left[\left(1-\sqrt{2{ \theta}}\right)-{\eta} \Gamma \|f'({\bar x})^{-1}\|\left(1+\sqrt{2{\theta}}\right)\right]}{L\left[1 + 2 \mu - \sqrt{2\theta}\right] \|f'({\bar x})^{-1}\| }\right]^{1/\mu}\right\}. \label{eq:DeltaT2}
\end{eqnarray}
Then, every sequence $\{x_k\}$  generated by Algorithm~\ref{Alg:INP} starting in  $x_0 \in C \cap B_\delta({\bar x})\backslash \{{\bar x}\}$, with $0\leq\eta_k < {\eta}$ and $0\leq \theta_k< {\theta}$ for all $k=0, 1, \ldots$,   is  contained in $ B_\delta({\bar x})\cap C$, satisfies
\begin{equation} \label{eq:BoundIneT2}
\|x_{k+1} - {\bar x}\|  \leq  \left[\frac{ \|f'({\bar x})^{-1}\|[\eta_k \Gamma(1 + \mu) + \mu L\|x_k-{\bar x} \|^{\mu}] }{(1+\mu)[1- L{ \|f'({\bar x})^{-1}\|}\|x_k-{\bar x} \|^\mu]}\left(1+\sqrt{2\theta_k}\right) + \sqrt{2\theta_k}\right] \|x_k-{\bar x} \|,
\end{equation}
for all $k = 0, 1, \ldots$, and converges $Q$-linearly to ${\bar x}$.  As a consequence, if $\lim_{k \to +\infty} \theta_k = 0$ and $\lim_{k \to +\infty} \eta_k = 0$, then $\{x_k\}$ converges $Q$-superlinearly    to ${\bar x}$. Furthermore,    letting  $\eta_k <  \min \{ {\eta}\|f(x_k)\|^{\mu}, \eta\}$  and $\theta_k < \min \{{\theta}\|f(x_k)\|^{2\mu}, \theta\}$,  the convergence of $\{x_k\}$ to ${\bar x}$ is of the order of $1 + \mu$.
\end{theorem}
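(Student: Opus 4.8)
The plan is to mirror the proof of Theorem~\ref{th:conv}, exploiting that continuous differentiability forces $\partial f(x_k) = \{f'(x_k)\}$, so that the residual criterion \eqref{eq:semsnew} is realized with $V_k = f'(x_k)$. I would argue by induction on $k$ that $\{x_k\} \subset B_\delta({\bar x}) \cap C$ and that \eqref{eq:BoundIneT2} holds. For the base case, since $x_0 \in B_\delta({\bar x}) \subset B_{\hat\delta}({\bar x})$ the matrix $f'(x_0)$ is invertible and $y_0$ is well-defined; using $f({\bar x})=0$ I would start from the identity
$$y_0 - {\bar x} = f'(x_0)^{-1}\Big(\big[f(x_0)+f'(x_0)(y_0-x_0)\big] + \big[f({\bar x})-f(x_0)-f'(x_0)({\bar x}-x_0)\big]\Big),$$
take norms, and bound the two bracketed quantities by \eqref{eq:semsnew} and \eqref{eq:ssecA1} respectively, while bounding $\|f'(x_0)^{-1}\|$ via \eqref{eq:fsecA1}.

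Combining these with $\|f(x_0)\| \leq \Gamma\|x_0 - {\bar x}\|$ from \eqref{eq:gamma1} and factoring out $\|x_0-{\bar x}\|$ gives
$$\|y_0 - {\bar x}\| \leq \frac{\|f'({\bar x})^{-1}\|[\eta_0\Gamma(1+\mu)+\mu L\|x_0-{\bar x}\|^\mu]}{(1+\mu)[1-L\|f'({\bar x})^{-1}\|\|x_0-{\bar x}\|^\mu]}\|x_0-{\bar x}\|.$$
Applying Lemma~\ref{pr:condi} with $y=y_0$, $x=x_0$, $\theta=\theta_0$, and $\tilde y=\tilde x={\bar x}$ (so that $P_C({\bar x},{\bar x},0)={\bar x}$), together with $\|y_0-x_0\| \leq \|y_0-{\bar x}\|+\|x_0-{\bar x}\|$, yields $\|x_1-{\bar x}\| \leq \|y_0-{\bar x}\|(1+\sqrt{2\theta_0}) + \sqrt{2\theta_0}\|x_0-{\bar x}\|$, which is precisely \eqref{eq:BoundIneT2} for $k=0$; the general induction step is identical with $0$ replaced by $k$.

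The crux, and the step I expect to be the main obstacle, is verifying that the bracketed contraction factor in \eqref{eq:BoundIneT2} is strictly below $1$ for every iterate in $B_\delta({\bar x})$, since this is what confines the sequence to the ball and forces convergence. As the map $s\mapsto(\eta\Gamma(1+\mu)+\mu L s)/(1-L\|f'({\bar x})^{-1}\|s)$ is increasing in $s=\|x_k-{\bar x}\|^\mu$, the worst case is $s=\delta^\mu$; requiring the factor $<1$ there and clearing denominators reduces, after using the identities $\mu+(1+\mu)\rho = (1+2\mu-\sqrt{2\theta})/(1+\sqrt{2\theta})$ and $(1+\mu)(\rho-\|f'({\bar x})^{-1}\|\eta\Gamma) = (1+\mu)[(1-\sqrt{2\theta})-\eta\Gamma\|f'({\bar x})^{-1}\|(1+\sqrt{2\theta})]/(1+\sqrt{2\theta})$ with $\rho=(1-\sqrt{2\theta})/(1+\sqrt{2\theta})$, exactly to the threshold on $\delta^\mu$ imposed by \eqref{eq:DeltaT2}; condition \eqref{eq:EtaT2} guarantees that this threshold is positive, so that such a $\delta$ exists. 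This uniform contraction delivers $Q$-linear convergence.

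Finally, the three rate statements follow as in Theorem~\ref{th:conv}. Passing to the limit in \eqref{eq:BoundIneT2} and using $\|x_k-{\bar x}\|\to 0$ shows the contraction factor tends to $0$ once $\theta_k,\eta_k\to 0$ (here $\mu>0$, so the residual term $\mu L\|x_k-{\bar x}\|^\mu$ also vanishes), giving $Q$-superlinear convergence. Under $\eta_k<\min\{\eta\|f(x_k)\|^\mu,\eta\}$ and $\theta_k<\min\{\theta\|f(x_k)\|^{2\mu},\theta\}$, the bound $\|f(x_k)\|\leq\Gamma\|x_k-{\bar x}\|$ converts these into $\eta_k<\eta\Gamma^\mu\|x_k-{\bar x}\|^\mu$ and $\sqrt{2\theta_k}<\Gamma^\mu\sqrt{2\theta}\|x_k-{\bar x}\|^\mu$, so factoring one extra $\|x_k-{\bar x}\|^\mu$ out of \eqref{eq:BoundIneT2} leaves a bounded coefficient multiplying $\|x_k-{\bar x}\|^{1+\mu}$, which is convergence of order $1+\mu$.
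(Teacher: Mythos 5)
Your proposal is correct and follows essentially the same route as the paper: the same decomposition of $y_0-\bar x$, the same use of \eqref{eq:semsnew}, \eqref{eq:fsecA1}, \eqref{eq:ssecA1}, and \eqref{eq:gamma1}, the same application of Lemma~\ref{pr:condi} with $\tilde y=\tilde x=\bar x$, and the same induction; your algebraic verification that \eqref{eq:DeltaT2} is exactly the threshold making the bracketed factor less than $1$ is accurate. The only (harmless) deviation is that you conclude $Q$-linear convergence from a uniform contraction constant, whereas the paper argues that $\{\|x_k-\bar x\|\}$ is decreasing and rules out a nonzero limit by contradiction; both are valid.
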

\begin{proof}
First, note that as $f$ is continuously differentiable  at $x$, we have $\partial f(x) = \{f'(x)\}$. We show by induction on $k$ that if $x_0 \in C \cap B_\delta({\bar x})\backslash \{{\bar x}\}$,  then every  sequence $\{x_k\}$ generated by Algorithm~\ref{Alg:INP} is contained in $B_\delta({\bar x})\cap C$ and  satisfies  \eqref{eq:BoundIneT2}. To this end, take $\theta_0\geq 0$, $\eta_0\geq 0$, $x_{0} \in  C \cap B_{\delta}({\bar x})\backslash \{{\bar x}\}$, and set $k = 0$. Owing to $\|x_0 - {\bar x}\|< \delta$,  we obtain that  $f'(x_0)$ is nonsingular. Consequently,   \eqref{eq:semsnew} with  $k=0$ and  $V_{x_0} = f'(x_0)$, implies that   $y_0$  is well-defined. Because $f({\bar x}) = 0$, after some algebraic manipulations,  we have
$$
y_0 - {\bar x} = f'(x_0)^{-1}\left(\left[f(x_0)+ f'(x_0)(y_0 -x_0)\right] + \left[f({\bar x})-f(x_0) - f'(x_0)( {\bar x}-x_0)\right]\right).
$$
Taking the norm on both sides of the last inequality and using triangular inequality,  we conclude that
$$
\|y_0 - {\bar x}\|\leq \left\|f'(x_0)^{-1}\right\|\Big[\big\| f(x_0)+ f'(x_0)(y_0 -x_0) \big\| + \big\|f({\bar x}) - f(x_0) - f'(x_0)( {\bar x}-x_0)\big\| \Big].
$$
Using \eqref{eq:semsnew}  with $k=0$ and $V_{x_0} = f'(x_0)$,  and the assumptions \eqref{eq:fsecA1} and \eqref{eq:ssecA1} with $x=x_0$, we obtain
\begin{equation}\label{eq:lim1}
\|y_0 - {\bar x}\| \leq \frac{\|f'({\bar x})^{-1}\|}{1 - L \|f'({\bar x})^{-1}\|\|x_0-{\bar x}\|^{\mu}}\left[\eta_0 \|f(x_0)\| + \frac{\mu L}{1 + \mu} \|x_0 - {\bar x}\|^{1+\mu} \right].
\end{equation}
Owing to $f({\bar x}) = 0$,  from  \eqref{eq:gamma1} we conclude that  $\|f(x_0)\|  \leq \Gamma\|x_0 - {\bar x}\|$. Hence, \eqref{eq:lim1}  becomes
\begin{equation}\label{eq:lim2}
\|y_0 - {\bar x}\| \leq  \frac{\|f'({\bar x})^{-1}\|[\eta_0 \Gamma (1+\mu) + \mu L \|x_0 - {\bar x}\|^{\mu}]}{(1 + \mu)[1 - L\|f'({\bar x})^{-1}\|\|x_0 - {\bar x}\|^{\mu}]} \|x_0 - {\bar x}\|.
\end{equation}
On the other hand, letting   $x_1 \in P_C(y_0, x_0, \theta_0)$,  Lemma~\ref{pr:condi} with $y = y_0$, $x=x_0$, $\theta = \theta_0$, $\tilde y={\bar x}$, and $\tilde x={\bar x}$,  implies that
$$
\left\|x_1 -{\bar x} \right\| \leq  \left\|y_0-{\bar x}\right\| + \sqrt{2\theta_0} \left\|y_0 - x_0\right\|\leq  \left\|y_0-{\bar x}\right\|  \left(1 + \sqrt{2\theta_0} \right) + \sqrt{2\theta_0} \left\|x_0-{\bar x} \right\|.
$$
Thus, combining the inequality \eqref{eq:lim2} with  the last inequality, we conclude that
$$
\left\|x_1-{\bar x} \right\|\leq  \frac{\|f'({\bar x})^{-1}\|[\eta_0 \Gamma (1+\mu) + \mu L \|x_0 - {\bar x}\|^{\mu}]}{(1 + \mu)[1 - L\|f'({\bar x})^{-1}\|\|x_0 - {\bar x}\|^{\mu}]}\left\|x_0-{\bar x} \right\| \left(1 + \sqrt{2\theta_0}\right) + \sqrt{2\theta_0} \left\|x_0-{\bar x} \right\|,
$$
which it is equivalent to \eqref{eq:BoundIneT2} for $k=0$. As $\eta_0 < \eta$ and $\theta_0 < \theta$,  by using \eqref{eq:EtaT2} and  \eqref{eq:DeltaT2}, we have
\begin{multline*}
\frac{\|f'({\bar x})^{-1}\|[\eta_0 \Gamma (1+\mu) + \mu L \|x_0 - {\bar x}\|^{\mu}]}{(1 + \mu)[1 - L\|f'({\bar x})^{-1}\|\|x_0 - {\bar x}\|^{\mu}]}\left(1+\sqrt{2\theta_0}\right) +  \sqrt{2\theta_0} < \\
\frac{\|f'({\bar x})^{-1}\|[\eta \Gamma (1+\mu) + \mu L { \delta}^{\mu}]}{(1 + \mu)[1 - L\|f'({\bar x})^{-1}\|{ \delta}^{\mu}]}\left(1+\sqrt{2\theta}\right) +  \sqrt{2\theta} < 1.
\end{multline*}
Then, because $x_0\in B_{\delta}({\bar x})$  we obtain from \eqref{eq:BoundIneT2} with $k=0$, that  $\left\|x_1 -{\bar x}  \right\| <  \|x_0 -{\bar x} \|<\delta$. As  $P_C(y_0, x_0, \theta_0)$ belongs to $C$ and $ x_1\in P_C\left(y_0, x_0, \theta_0\right)$,  we conclude that $ x_1$   belongs to  $B_\delta({\bar x})\cap C$, which completes the induction step for $k=0$.   The general  induction step is completely analogous.  Therefore,  every  sequence $\{x_k\}$ generated by Algorithm~\ref{Alg:INP} is contained in $B_\delta({\bar x})\cap C$ and  satisfies  \eqref{eq:BoundIneT2}. Now, we proceed to prove that the sequence $\{x_k\}$ converges to ${\bar x}$. As $0\leq\theta_k<{\theta}$ and $0 \leq\eta_k < {\eta}$ for all $k=0, 1, \ldots$, it follows from \eqref{eq:EtaT2}, \eqref{eq:BoundIneT2}, and \eqref{eq:DeltaT2} that
$$
\|x_{k+1}- {\bar x}\| <  \left[\frac{\|f'({\bar x})^{-1}\|[\eta \Gamma(1 + \mu) + \mu L\|x_k-{\bar x} \|^{\mu}] }{(1+\mu)[1-  L\|f'({\bar x})^{-1}\|\|x_k-{\bar x} \|^\mu]}\left(1+\sqrt{2\theta}\right) +  \sqrt{2\theta}\right]\|x_k - {\bar x}\|< \|x_k - {\bar x}\|,
$$
for all $k = 0, 1, \ldots$. This implies that the sequence $\{\|x_k - {\bar x} \|\}$ converges.  Let us say that ${\bar t}:=\lim_{k\to +\infty}\|x_k - {\bar x}\| \leq \delta$. Thus, taking the  limit in the last inequality   as  $k$ goes to $+ \infty$,  we obtain
$$
{\bar t} \leq \left[\frac{\|f'({\bar x})^{-1}\|[\eta \Gamma(1 + \mu) + \mu L{\bar t}^{\mu}] }{(1+\mu)[1- L\|f'({\bar x})^{-1}\|{\bar t}^\mu]}\left(1+\sqrt{2\theta}\right) +  \sqrt{2\theta}\right]{\bar t},
$$
If ${\bar t}\neq 0$, then \eqref{eq:EtaT2} and  \eqref{eq:DeltaT2}  implies that  $ {\bar t} < {\bar t}$, which is absurd.  Hence,  ${\bar t} = 0$ and consequently  $\{x_k\}$ converge $Q$-linearly to ${\bar x}$.  Assuming  that  $\lim_{k \to +\infty} \theta_k = 0$ and $\lim_{k \to +\infty} \eta_k = 0$,     it  follows  from \eqref{eq:BoundIneT2}  that
$$
\lim_{k\to +\infty} \frac{\|x_{k+1}- {\bar x}\|}{\|x_{k}- {\bar x}\|}=0.
$$
Hence, $\{x_k\}$ converges $Q$-superlinearly    to ${\bar x}$.  Now, we assume that  $\eta_k <  \min \{ {\eta}\|f(x_k)\|^{\mu}, \eta\}$ and $\theta_k < \min \{{\theta}\|f(x_k)\|^{2\mu}, \theta\}$.   Considering that  $\{x_k\}$   belongs to  $ B_\delta({\bar x})$, $f({\bar x}) = 0$, and $\delta < r$, it follows from  \eqref{eq:gamma1} that $\|f(x_k)\|  \leq \Gamma\|x_k - {\bar x}\|$ for all $ k = 0, 1, \ldots$. Thus, $\eta_k < \eta \Gamma^{\mu}\|x_k - {\bar x}\|^{\mu}$ and  $\theta_k < \theta \Gamma^{2\mu}\|x_k - {\bar x}\|^{2\mu}$ for all $ k = 0, 1, \ldots$. Then, \eqref{eq:BoundIneT2} implies
\begin{multline*}
\|x_{k+1} - {\bar x}\|  <  \left[\frac{ \|f'({\bar x})^{-1}\|[\eta \Gamma^{1+\mu}(1 + \mu) + \mu L] }{(1+\mu)[1- L{ \|f'({\bar x})^{-1}\|}\|x_k-{\bar x} \|^\mu]}\left(1+\Gamma^{\mu}\sqrt{2\theta} \|x_k - {\bar x}\|^{\mu}\right)\right. +  \\ \left.\Gamma^{\mu}\sqrt{2\theta}\right] \|x_k-{\bar x} \|^{1+\mu},
\end{multline*}
for all $k = 0, 1, \ldots$. Therefore, $\{x_k\}$ converges to ${\bar x}$ with  order $1 + \mu$, which complete  the proof.
\end{proof}

In the next remark, we present a particular case of Theorem~\ref{th:mainInex}, where  the projection and  Newton method are exact.
\begin{remark}
In Theorem~\ref{th:mainInex}, if we take $\theta_k\equiv 0$ and $\eta_k \equiv 0$, then  for $0< \mu \leq 1$,  the convergence  rate  is $1 + \mu $ as follows
$$
\| x_{k+1} - {\bar x}\| \leq \frac{\mu L\|f'({\bar x})^{-1}\| }{(1+\mu)[1-   L\|f'({\bar x})^{-1}\|\|x_k-{\bar x} \|^\mu]} \|x_k - {\bar x}\|^{1 + \mu}, \qquad k = 0, 1, \ldots.
$$
\end{remark}
\section{Special cases}\label{sec:specas}
In this section, we present  two special cases: one  of  Theorem~\ref{th:conv} and one of  Theorem~\ref{th:mainInex}.  We begin by presenting the special case of  Theorem~\ref{th:conv}.
\subsection{Under semismooth condition}
In this section, we present a local convergence theorem for the inexact Newton-InexP method  for solving constrained semismooth equations. The semismoothness plays an important role,  since the Newton method is still applicable and converges locally with superlinear rate to a regular solution. Let us first to present the concept of regularity.

\begin{definition}
Let $\Omega \subseteq \mathbb{R}^n$ be an open set. A locally Lipschitz continuous function  $f: \Omega  \to \mathbb{R}^n$ is said to be  regular at ${\bar x}\in \Omega $ if every $V_{\bar x}\in \partial f({\bar x})$ is nonsingular. If $f$ is regular at all points of $\Omega$,  the function $f$  is said to be regular on $\Omega$.
\end{definition}

In the following, our first task is to prove that locally Lipschitz continuous functions satisfy \eqref{eq:fcA1} near a regular point for every  $0<\epsilon<1/ \lambda_{\bar x}$.  First,  we remind that $\partial f({x})$ is a nonempty and compact set for all $x\in  \Omega$. See \cite[Proposition~2.6.2, p. 70]{Clarke1990}. The statement  of the result is as follows.
\begin{lemma}\label{lem:semism}
Let $\Omega \subseteq \mathbb{R}^n$ be an open set and  $f: \Omega \to \mathbb{R}^n$ be a locally Lipschitz continuous function. If $f$ is regular at  $ {\bar x}\in \Omega$, then  for every $0<\epsilon < 1/\lambda_{\bar x}$, where $\lambda_{\bar x}\geq \max\{\|{V_{\bar x}^{-1}}\|: ~ { V_{\bar x}}\in \partial f({\bar x})\}$,  there exists $\delta >0$ such that $f$ is regular on  $B_{\delta}({\bar x})$ and there holds
\begin{equation} \label{eq:BanachLem}
\|V_{x}^{-1}\| \leq  \frac{{\lambda_{\bar x}}}{1 - {\lambda_{\bar x}}\epsilon}, \qquad \forall ~V_x\in \partial f(x), \quad \forall~x\in  B_{\delta}({\bar x}).
\end{equation}
\end{lemma}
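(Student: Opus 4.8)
The plan is to combine the upper semicontinuity of the Clarke generalized Jacobian with the classical Banach perturbation lemma. First I would record that, since $f$ is regular at ${\bar x}$, every matrix in the nonempty compact set $\partial f({\bar x})$ is nonsingular; as matrix inversion and the operator norm are continuous on the open set of invertible matrices, the quantity $\max\{\|V_{\bar x}^{-1}\|:~V_{\bar x}\in\partial f({\bar x})\}$ is attained and finite, so a constant $\lambda_{\bar x}$ as in the statement exists.

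Fix $0<\epsilon<1/\lambda_{\bar x}$. The key geometric step is to transfer invertibility from ${\bar x}$ to a neighborhood. Consider the open set $U:=\{M\in\mathbb{R}^{n\times n}:~\operatorname{dist}(M,\partial f({\bar x}))<\epsilon\}$, which contains $\partial f({\bar x})$. Because $\partial f$ is upper semicontinuous at ${\bar x}$ (see the remark following Definition~\ref{def:JacClarke}, based on \cite[Proposition~2.6.2, p.~70]{Clarke1990}), there is $\delta>0$ with $B_\delta({\bar x})\subseteq\Omega$ and $\partial f(x)\subseteq U$ for all $x\in B_\delta({\bar x})$. Hence, given any $x\in B_\delta({\bar x})$ and $V_x\in\partial f(x)$, compactness of $\partial f({\bar x})$ lets me pick $V_{\bar x}\in\partial f({\bar x})$ attaining the distance, so that $\|V_x-V_{\bar x}\|<\epsilon$.

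Now I would invoke the Banach perturbation lemma for the pair $(V_{\bar x},V_x)$. Since $\|V_{\bar x}^{-1}\|\,\|V_x-V_{\bar x}\|\le\lambda_{\bar x}\epsilon<1$, the matrix $V_x$ is nonsingular and
$$
\|V_x^{-1}\|\le\frac{\|V_{\bar x}^{-1}\|}{1-\|V_{\bar x}^{-1}\|\,\|V_x-V_{\bar x}\|}.
$$
As $V_x\in\partial f(x)$ and $x\in B_\delta({\bar x})$ were arbitrary, this already yields that $f$ is regular on $B_\delta({\bar x})$. To reach the stated bound \eqref{eq:BanachLem}, I would use that the right-hand side above, viewed as a function of the two nonnegative quantities $a=\|V_{\bar x}^{-1}\|\le\lambda_{\bar x}$ and $b=\|V_x-V_{\bar x}\|\le\epsilon$, namely $a/(1-ab)$, is nondecreasing in each of $a$ and $b$ on the region where $ab<1$; replacing $a$ by $\lambda_{\bar x}$ and $b$ by $\epsilon$ then gives $\|V_x^{-1}\|\le\lambda_{\bar x}/(1-\lambda_{\bar x}\epsilon)$, exactly as required.

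I expect the main obstacle to be conceptual rather than computational: using upper semicontinuity correctly to obtain a single radius $\delta$ that controls every element of $\partial f(x)$ simultaneously, and then ensuring the perturbation bound is expressed through the uniform constant $\lambda_{\bar x}$ rather than the $x$-dependent quantity $\|V_{\bar x}^{-1}\|$ — this last replacement is precisely where the monotonicity of $a/(1-ab)$ enters. The remaining verifications, namely that the partial derivatives of $a/(1-ab)$ with respect to $a$ and $b$ are positive on $ab<1$, are routine and I would not dwell on them.
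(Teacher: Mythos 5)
Your proof is correct and follows essentially the same route as the paper: both arguments use the upper semicontinuity of $\partial f$ at ${\bar x}$ to place every $V_x\in\partial f(x)$ within $\epsilon$ of some nonsingular $V_{\bar x}\in\partial f({\bar x})$, then apply the Banach perturbation lemma and the bound $\|V_{\bar x}^{-1}\|\le\lambda_{\bar x}$ to obtain \eqref{eq:BanachLem}. Your explicit monotonicity check of $a/(1-ab)$ merely spells out the final estimate that the paper leaves implicit.
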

\begin{proof}
As $f$ is regular at ${\bar x \in \Omega}$ and $\partial f({\bar x})$ is a nonempty and compact set, $\lambda_{\bar x} >0$ is well-defined. On the other hand, it follows from  \cite[Proposition 2.6.2, p.~70]{Clarke1990}  that $\partial f$ is upper semicontinuous at ${\bar x}$. Thus, for every $\epsilon >0$  there exists $\delta>0$ such that
$$
\partial f(x) \subset \left\{V_x \in \mathbb{R}^{n\times n}: ~ \|V_x - V_{\bar x}\|< \epsilon, ~  \mbox{for some}~   V_{\bar x} \in \partial f({\bar x})\right\},  ~  \forall~x\in  B_{\delta}({\bar x}).
$$
Hence, for each   $V_x\in \partial f(x)$ and  $0<\epsilon < 1/\lambda_{\bar x}$,  there exists $V_{\bar x} \in \partial f({\bar x})$ that is nonsingular such that    $\| V_{\bar x}^{-1}\|\|V_x - V_{\bar x}\|<  \lambda_{\bar x} \epsilon< 1$. Thus, applying the Banach lemma, see \cite[Lemma 5A.4, p.~282]{DontchevRockafellar2010Book},  we conclude  that $V_x$ is nonsingular and
$$
\|V_{x}^{-1}\| \leq \frac{\| V_{\bar x}^{-1}\|}{1 - \| V_{\bar x}^{-1}\|\|V_x - V_{\bar x}\|}.
$$
Therefore, considering that $\| V_{\bar x}^{-1}\| \leq \lambda_{\bar x}$, the inequality \eqref{eq:BanachLem} follows, and the proof is complete.
\end{proof}

In the following,  we present a class of functions satisfying the inequality \eqref{eq:scA1}, namely the  semismooth functions. There are several equivalent definitions for  semismooth functions,  here we  use  that given in \cite[p.~411]{DontchevRockafellar2010Book}.  For an extensive study on  semismooth  functions,  see, for example,   \cite{Facchinei2003}.
\begin{definition} \label{Def:DefSS}
Let $\Omega\subseteq \mathbb{R}^n$ be an open set.  A function $f: \Omega \to \mathbb{R}^n$ that is locally Lipschitz continuous   on $\Omega$  and directionally differentiable in every direction is said to be  semismooth at ${\bar x} \in \Omega$  when for every  $\epsilon>0$   there exists $\delta>0$ such that
\begin{equation*} \label{eq:CalLem}
\|f({\bar x})-f(x)-V_x({\bar x}-x)\|\leq \epsilon \|x-{\bar x}\|,\qquad \forall~V_x\in \partial f(x), \quad\forall~x\in B_{\delta}({\bar x}),
\end{equation*}
and is said to be  $\mu$-order semismooth  at ${\bar x} \in \Omega$, for $0<\mu\leq1$   when there exist $\epsilon >0$ and  $\delta>0$ such that
\begin{equation*} \label{eq:MuSs}
\|f({\bar x})-f(x)-V_x({\bar x}-x)\|\leq \epsilon \|x-{\bar x}\|^{1+\mu},\qquad \forall~V_x\in \partial f(x), \quad \forall~x\in B_{\delta}({\bar x}).
\end{equation*}
\end{definition}

Next, we state and prove the local convergence result of the inexact Newton-InexP method for solving constrained semismooth equations.
\begin{theorem}\label{th:mainss}
Let $\Omega \subseteq {\mathbb R}^n$ be an open set, $C\subset \Omega$ be a closed convex set, and  $f: \Omega \to {\mathbb R}^n$ be semismooth and regular at ${\bar x} \in \Omega$. Let     $\Gamma>0$ and $0<r \leq {\bar r}:=\sup\left\{t\in {\mathbb R}:~ B_t({\bar x})\subset \Omega\right\}$ such that
$$
\|f(x) - f({\bar x})\| \leq \Gamma \|x - {\bar x}\|, \qquad \forall~x\in  B_{ r}({\bar x}).
$$
Take    ${\theta} > 0$ and    ${\eta}> 0$  such that
$$
{\theta}< \frac{1}{2}, \qquad \qquad {\eta}<\frac{1-\sqrt{2{\theta}}}{\lambda_{\bar x}\Gamma\left(1+\sqrt{2{\theta}}\right)}.
$$
Assume that  ${\bar x}\in C$ and   $f({\bar x}) = 0$.    Then,   there exists   $0<{ \delta}\leq r$  such that  every sequence $\{x_k\}$  generated by Algorithm~\ref{Alg:INP}  starting in $x_0 \in C \cap B_\delta({\bar x})\backslash \{{\bar x}\}$,  with  $0\leq \theta_k < {\theta}$ and  $0\leq \eta_k <{\eta}$ for all $k=0, 1, \ldots$, belongs to  $ B_\delta({\bar x})\cap C$ and converges $Q$-linearly to ${\bar x}$.  If  $\lim_{k \to +\infty} \theta_k =0$ and $ \lim_{k \to +\infty} \eta_k = 0$, then $\{x_k\}$ converges $Q$-superlinearly to ${\bar x}$. In addition, if  $f$ is $\mu$-order semismooth at ${\bar x}$,  $\eta_k <  \min \{ {\eta}\|f(x_k)\|^{\mu}, \eta\}$, and $\theta_k < \min \{{\theta}\|f(x_k)\|^{2\mu}, \theta\}$, then the convergence of $\{x_k\}$ to ${\bar x}$ is  of the order of $1 + \mu$.
\end{theorem}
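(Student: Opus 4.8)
The plan is to obtain Theorem~\ref{th:mainss} as a corollary of Theorem~\ref{th:conv}, by checking that regularity and semismoothness of $f$ at $\bar x$ supply precisely the two structural hypotheses \eqref{eq:fcA1} and \eqref{eq:scA1} demanded there. First I would record that, since $f$ is regular at $\bar x$, Lemma~\ref{lem:semism} provides, for any prescribed $\epsilon\in(0,1/\lambda_{\bar x})$, a ball on which every $V_x\in\partial f(x)$ is nonsingular and \eqref{eq:fcA1} holds. Next, semismoothness of $f$ at $\bar x$ is exactly the statement (the linear bound in Definition~\ref{Def:DefSS}) that, for every $\epsilon>0$, there is a ball on which \eqref{eq:scA1} holds with $\mu=0$. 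Thus both hypotheses of Theorem~\ref{th:conv} become available with a common constant $\epsilon$ that we are free to make as small as we wish.

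The key quantitative step is the choice of $\epsilon$. The standing assumption $\eta<(1-\sqrt{2\theta})/(\lambda_{\bar x}\Gamma(1+\sqrt{2\theta}))$ guarantees that $(1-\sqrt{2\theta})-\eta\lambda_{\bar x}\Gamma(1+\sqrt{2\theta})$ is strictly positive, so the right-hand side of the second inequality in \eqref{eq:DeltaEtaT1} is a positive number; hence I can fix $\epsilon>0$ small enough to satisfy both inequalities in \eqref{eq:DeltaEtaT1}, and then automatically $\epsilon<1/(2\lambda_{\bar x})<1/\lambda_{\bar x}$, so Lemma~\ref{lem:semism} does apply. Setting $\delta$ equal to the minimum of the radius from Lemma~\ref{lem:semism}, the radius from semismoothness, $r$, and $1$, all hypotheses of Theorem~\ref{th:conv} hold with $\mu=0$. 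Invoking Theorem~\ref{th:conv} then yields immediately that every sequence starting in $C\cap B_\delta(\bar x)\setminus\{\bar x\}$ is well-defined, remains in $B_\delta(\bar x)\cap C$, and converges $Q$-linearly to $\bar x$, with $Q$-superlinear convergence when $\eta_k,\theta_k\to 0$.

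For the order-$(1+\mu)$ statement I would assume $f$ is additionally $\mu$-order semismooth, which furnishes \eqref{eq:scA1} with the genuine exponent $1+\mu$ on a (possibly smaller) ball. The hard part is that $\mu$-order semismoothness only yields this $(1+\mu)$-bound with some fixed constant $\epsilon_0>0$ that cannot be shrunk for the exponent $1+\mu$, whereas a literal black-box use of Theorem~\ref{th:conv} with $\mu>0$ would force that single constant to obey the smallness condition \eqref{eq:DeltaEtaT1}. I would resolve this by decoupling the two constants: convergence (hence $\|x_k-\bar x\|\to 0$ and containment in the ball) is already secured by the $\mu=0$ argument above, so for the order it suffices to reproduce the estimate \eqref{eq:BoundIneT1}, whose derivation uses only \eqref{eq:semsnew}, the Lipschitz bound, \eqref{eq:fcA1}, \eqref{eq:scA1}, and Lemma~\ref{pr:condi}, but never \eqref{eq:DeltaEtaT1}. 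Keeping the \eqref{eq:fcA1}-constant small (from Lemma~\ref{lem:semism}, so the denominator $1-\epsilon\lambda_{\bar x}$ stays positive) while using $\epsilon_0$ only for the \eqref{eq:scA1}-term, which enters merely the numerator, produces a bound $\|x_{k+1}-\bar x\|\leq[\,\cdots\,]\|x_k-\bar x\|^{1+\mu}$ with a finite bracket once $\eta_k<\eta\|f(x_k)\|^{\mu}$ and $\theta_k<\theta\|f(x_k)\|^{2\mu}$ are inserted and $\|f(x_k)\|\leq\Gamma\|x_k-\bar x\|$ is applied. This is precisely convergence of order $1+\mu$.

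Finally I would verify two bookkeeping points: that $\delta$ is also taken no larger than the radius on which the $\mu$-order bound holds, and that the bracket in the order estimate stays uniformly bounded along the tail of the sequence, which is immediate since $\|x_k-\bar x\|\to 0$. The only genuinely delicate issue is the constant-decoupling described above; the rest is a direct translation of regularity and (higher-order) semismoothness into the hypotheses of Theorem~\ref{th:conv}.
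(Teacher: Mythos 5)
Your proposal follows the same route as the paper's own proof: Theorem~\ref{th:mainss} is obtained as a corollary of Theorem~\ref{th:conv}, with Lemma~\ref{lem:semism} supplying \eqref{eq:fcA1}, Definition~\ref{Def:DefSS} supplying \eqref{eq:scA1}, and $\epsilon$ then chosen small enough to meet \eqref{eq:DeltaEtaT1}. The one place where you go beyond the printed argument is the $\mu$-order case. The paper disposes of it with the clause ``by reducing $\epsilon>0$ so that it satisfies the second inequality in \eqref{eq:DeltaEtaT1}'', which is unproblematic for \eqref{eq:fcA1} (Lemma~\ref{lem:semism} holds for every $\epsilon<1/\lambda_{\bar x}$) and for \eqref{eq:scA1} with $\mu=0$ (ordinary semismoothness holds for every $\epsilon$), but cannot literally be applied to the fixed constant in the $\mu$-order semismoothness bound, since that constant scales a term of exponent $1+\mu$ and cannot be shrunk by shrinking the ball without losing the exponent. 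Your decoupling --- securing containment and convergence via the $\mu=0$ instance with a small $\epsilon$, then re-running the derivation of \eqref{eq:BoundIneT1} (which nowhere uses \eqref{eq:DeltaEtaT1}) with the fixed $\mu$-order constant entering only the numerator --- is exactly the right repair, and it costs nothing because once the factor $\|x_k-\bar x\|^{1+\mu}$ has been extracted the bracket only needs to be bounded, not less than one. So your argument is correct, takes the paper's approach, and on this one point is more complete than the proof actually printed.
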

\begin{proof}
As the function $f$ is  semismooth  and regular at ${\bar x} \in \Omega$, we can take $\lambda_{\bar x}\geq \max\{\|{V_{\bar x}^{-1}}\|: ~ { V_{\bar x}}\in \partial f({\bar x})\}$.   Take $0< \epsilon<1/\lambda_{\bar x}$. Then, from Lemma~\ref{lem:semism}  and Definition~\ref{Def:DefSS}, there exists $0<\delta\leq \min\{ r, 1\}$ satisfying \eqref{eq:fcA1} and \eqref{eq:scA1} for $\mu=0$. In addition,  if  $f$ is  $\mu$-order semismooth, we conclude also  from  Lemma~\ref{lem:semism}  and Definition~\ref{Def:DefSS} that  there exists $0<\delta\leq \min\{ r, 1\}$ satisfying \eqref{eq:fcA1} and \eqref{eq:scA1} for $0<\mu\leq 1$.  Therefore,    $f$ satisfies all conditions of Theorem~\ref{th:conv} and  by reducing   $\epsilon> 0$ so that it satisfies the second  inequality in  \eqref{eq:DeltaEtaT1}, the result follows.
\end{proof}

In the following  remark, we show that with some adjustments Theorem~\ref{th:mainss}  reduces to some well-known results.
\begin{remark}
It is worth mentioning that if $C = {\mathbb R}^n$ and $\theta_k = 0$ for all $k = 0,1,\ldots$, then  with some adjustments Theorem~\ref{th:mainss}  reduces to \cite[Theorem 3]{JoseMariLiqun1995}; see also  \cite[Theorem~7.5.5, p. 694]{Facchinei2003}. If $C = {\mathbb R}^n$,  $\eta_k = \theta_k = 0$ for all $k = 0,1,\ldots$, then Theorem~\ref{th:mainss} reduces to \cite[Theorem~3.2]{Qi1993}, see also  \cite[Theorem~7.5.3, p.~693]{Facchinei2003}. Finally, if $C = {\mathbb R}^n$,  $f$ is a continuously differentiable function, $f'({\bar x})$ is nonsingular, and $\theta_k = \eta_k = 0$ for all $k =0,1,\ldots$, then the theorem above reduces to the first part of  \cite[Proposition~1.4.1, p.~90]{Bertsekas1999}.
\end{remark}
\subsection{Under radial H\"older condition on the derivative}
In this section, we present a local convergence theorem for the inexact Newton-InexP method under the radial H\"older condition on the derivative. We begin by presenting the definition of the radial  H\"older condition.
\begin{definition}
Let $\Omega \subseteq \mathbb{R}^n$ be an open set and $f: \Omega  \to \mathbb{R}^n$  be a  continuously differentiable function. The derivative   $f'$   satisfies the radial  H\"older condition  at  ${\bar x}\in \Omega$ if there  exist   $L > 0$ and $0<\mu \leq 1$ such that
$$
\|f'(x) - f'({\bar x} + \tau(x - {\bar x}))\| \leq L(1 - \tau^{\mu})\|x - {\bar x}\|^{\mu},
$$
for all $x\in \Omega$  and  $\tau \in [0,1]$ such that ${\bar x} + \tau(x - {\bar x})\in \Omega$.
\end{definition}

Our first task is to prove that continuously differentiable  functions with radially H\"older derivative  satisfy \eqref{eq:fsecA1} around regular points.
\begin{lemma}\label{lem:Holder1}
Let $\Omega \subseteq {\mathbb R}^n$ be an open set and $f: \Omega \to \mathbb{R}^n$ be a continuously differentiable  function.  Assume that $f'$ is nonsingular and    radially  H\"older at  ${\bar x}\in \Omega$,  with constants  $L > 0$  and $0< {\mu} \leq 1$.  Take
\begin{equation} \label{eq:delta6}
0<{\hat r}< \frac{1}{(L{\|f'(\bar x})^{-1}\|)^{1/{\mu}}}.
\end{equation}
Then, $f'(x)$ is nonsingular for all $x \in B_{\hat r}({\bar x})$,  and the following holds:
\begin{equation*} \label{eq:deltabl}
\left\|f'(x)^{-1}\right\|\leq \frac{{\|f'(\bar x})^{-1}\|}{1 -L{\|f'(\bar x})^{-1}\|\|x - {\bar x}\|^{\mu}},  \qquad \forall~ x \in B_{\hat r}({\bar x}).
\end{equation*}
\end{lemma}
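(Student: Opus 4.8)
The plan is to reduce the radial H\"older condition to an ordinary H\"older estimate at ${\bar x}$ and then invoke the Banach perturbation lemma, exactly in the spirit of the proof of Lemma~\ref{lem:semism}. First I would specialize the radial H\"older inequality to $\tau = 0$: since ${\bar x} + 0\cdot(x-{\bar x}) = {\bar x}\in \Omega$, the condition immediately yields
$$
\|f'(x) - f'({\bar x})\| \leq L\|x-{\bar x}\|^{\mu}, \qquad \forall~x\in \Omega.
$$
This single estimate is all that is needed from the radial structure; the factor $(1-\tau^{\mu})$ simply collapses to $1$ at $\tau=0$.

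Next I would fix $x\in B_{\hat r}({\bar x})$ and verify the smallness hypothesis of the Banach lemma. Multiplying the displayed bound by $\|f'({\bar x})^{-1}\|$ and using $\|x-{\bar x}\|<\hat r$ together with the choice \eqref{eq:delta6} of $\hat r$ gives
$$
\|f'({\bar x})^{-1}\|\,\|f'(x) - f'({\bar x})\| \leq L\|f'({\bar x})^{-1}\|\,\|x-{\bar x}\|^{\mu} < L\|f'({\bar x})^{-1}\|\,{\hat r}^{\mu} < 1.
$$
Since $f'({\bar x})$ is nonsingular, the Banach lemma \cite[Lemma 5A.4, p.~282]{DontchevRockafellar2010Book} then applies with perturbation $f'(x)-f'({\bar x})$ and guarantees that $f'(x)$ is nonsingular with
$$
\|f'(x)^{-1}\| \leq \frac{\|f'({\bar x})^{-1}\|}{1 - \|f'({\bar x})^{-1}\|\,\|f'(x) - f'({\bar x})\|}.
$$

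Finally I would replace $\|f'(x)-f'({\bar x})\|$ in the denominator by its upper bound $L\|x-{\bar x}\|^{\mu}$ to obtain precisely the claimed estimate. There is no serious obstacle here; the only point requiring a moment's care is that substituting the H\"older bound into the denominator preserves the inequality. This follows because the map $t\mapsto \|f'({\bar x})^{-1}\|/(1-t)$ is increasing on $[0,1)$, and by the second step both denominators $1 - \|f'({\bar x})^{-1}\|\,\|f'(x)-f'({\bar x})\|$ and $1 - L\|f'({\bar x})^{-1}\|\,\|x-{\bar x}\|^{\mu}$ remain strictly positive on $B_{\hat r}({\bar x})$; hence enlarging the subtracted quantity in the denominator only enlarges the fraction, yielding the stated bound and completing the proof.
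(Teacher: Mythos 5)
Your proof is correct and follows essentially the same route as the paper: both arguments reduce the radial H\"older condition (at $\tau=0$) to the bound $\|f'(x)-f'(\bar x)\|\leq L\|x-\bar x\|^{\mu}$, verify the smallness condition $L\|f'(\bar x)^{-1}\|\hat r^{\mu}<1$ from \eqref{eq:delta6}, and conclude via the Banach lemma \cite[Lemma 5A.4, p.~282]{DontchevRockafellar2010Book}. Your write-up is merely more explicit about the $\tau=0$ specialization and the monotonicity step used to substitute the H\"older bound into the denominator.
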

\begin{proof}
Using that    $f'$ is nonsingular and    radially  H\"older at  ${\bar x}\in \Omega$,  with constants  $L > 0$  and $0< {\mu} \leq 1$, and taking into account \eqref{eq:delta6}, we have
$$
\|f'({\bar x})^{-1}\|\|f'(x) - f'({\bar x})\| \leq  L \|f'({\bar x})^{-1}\|  \|x - {\bar x}\|^{\mu}< L \|f'({\bar x})^{-1}\| {\hat r}^{\mu} <1,
$$
for all $x \in B_{\hat r}({\bar x})$. Thus, the lemma  follows by applying the Banach lemma  \cite[Lemma 5A.4, p.~282]{DontchevRockafellar2010Book}.
\end{proof}

The next lemma establishes  that continuously differentiable functions with   radially  H\"older derivative  satisfy \eqref{eq:ssecA1}; its proof follows the same idea as \cite[Proposition~1.4.1, p.~90]{Bertsekas1999} and will be omitted here.
\begin{lemma}\label{lem:Holder2}
Let  $\Omega \subseteq {\mathbb R}^n$ be an open set,  ${\bar x}\in \Omega$, ${\bar r}:= \sup\{t \in \mathbb{R}:~ B_t({\bar x})\subset \Omega\}$, and $f: \Omega \to \mathbb{R}^n$ be a continuously differentiable function. Assume that $f'$ is   radially  H\"older at  ${\bar x}$,  with constants  $L > 0$  and $0<{\mu} \leq 1$.  Then it holds that
\begin{equation*}\label{eq:Holder}
\left\| f({\bar x}) - f(x)- f'(x)( {\bar x}-x)\right\|\leq \frac{{\mu} L}{1+{\mu}}\|x - {\bar x}\|^{1+ {\mu}},  \qquad \forall~ x \in B_{\bar r}({\bar x}).
\end{equation*}
\end{lemma}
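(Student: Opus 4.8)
The plan is to reduce the claim to the fundamental theorem of calculus applied along the radial segment joining ${\bar x}$ to $x$, and then to exploit the precise form of the radial H\"older bound so that the resulting integral evaluates to exactly the constant $\mu L/(1+\mu)$. First I would fix $x \in B_{\bar r}({\bar x})$ and note that, since $B_{\bar r}({\bar x})$ is convex and contained in $\Omega$, the whole segment $\{{\bar x} + \tau(x - {\bar x}): \tau \in [0,1]\}$ lies in $\Omega$; hence $f$ is continuously differentiable along it, and the fundamental theorem of calculus together with the chain rule gives
$$
f(x) - f({\bar x}) = \int_0^1 f'\bigl({\bar x} + \tau(x - {\bar x})\bigr)(x - {\bar x})\, d\tau.
$$

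Next I would rewrite the quantity to be estimated as a single integral. Since $\int_0^1 d\tau = 1$, we have $f'(x)({\bar x} - x) = -\int_0^1 f'(x)(x-{\bar x})\, d\tau$, so subtracting this from the displayed identity and rearranging yields the key identity
$$
f({\bar x}) - f(x) - f'(x)({\bar x} - x) = \int_0^1 \left[\,f'(x) - f'\bigl({\bar x} + \tau(x - {\bar x})\bigr)\right](x - {\bar x})\, d\tau.
$$
Taking norms and pulling $\|x - {\bar x}\|$ out of the integral then reduces the whole problem to bounding $\int_0^1 \bigl\|f'(x) - f'({\bar x} + \tau(x - {\bar x}))\bigr\|\, d\tau$.

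The final step is to apply the radial H\"older condition to the integrand, which gives $\|f'(x) - f'({\bar x} + \tau(x - {\bar x}))\| \leq L(1-\tau^\mu)\|x - {\bar x}\|^\mu$, and then to carry out the elementary integration $\int_0^1 (1 - \tau^\mu)\, d\tau = 1 - 1/(1+\mu) = \mu/(1+\mu)$. Combining these produces precisely the asserted bound $\tfrac{\mu L}{1+\mu}\|x - {\bar x}\|^{1+\mu}$, for all $x \in B_{\bar r}({\bar x})$.

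I do not anticipate a genuine obstacle, since this is the standard integral-remainder estimate underlying \cite[Proposition~1.4.1, p.~90]{Bertsekas1999}. The only point deserving care is the bookkeeping in the key identity: one must compare $f'(x)$ against the \emph{radially evaluated} derivative $f'({\bar x}+\tau(x-{\bar x}))$ rather than against $f'({\bar x})$, because it is exactly this comparison that lets the factor $(1-\tau^\mu)$ appear in the integrand and integrate to the sharp constant $\mu/(1+\mu)$ instead of a cruder value. The definition of the radial H\"older condition is evidently tailored to make this cancellation work.
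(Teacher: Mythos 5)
Your proof is correct and follows exactly the route the paper intends: the paper omits the proof, stating only that it ``follows the same idea as \cite[Proposition~1.4.1, p.~90]{Bertsekas1999},'' which is precisely your integral-remainder argument comparing $f'(x)$ with the radially evaluated derivative and integrating $1-\tau^{\mu}$ to obtain $\mu/(1+\mu)$. Nothing is missing.
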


Now, we are ready to present a local convergence theorem on the inexact Newton-InexP method for a continuously differentiable  function $f$ such  that $f'$ is radially  H\"older. The statement of the result is as follows.
\begin{theorem}\label{th:mainHolder}
Let $\Omega \subseteq {\mathbb R}^n$ be an open set, $C\subset \Omega$ be a closed convex set, and $f: \Omega \to \mathbb{R}^n$  be a continuously differentiable  function  such that  $f'$ is nonsingular and    radially  H\"older at  ${\bar x}\in \Omega$, with constants  $L > 0$  and $0<{\mu} \leq 1$.   Let $\Gamma>0$ and $0<r \leq {\bar r}:=\sup\left\{t\in {\mathbb R}:~ B_t({\bar x})\subset \Omega\right\}$ such that
$$
\|f(x) - f({\bar x})\| \leq \Gamma \|x - {\bar x}\|, \qquad \forall~x\in  B_{ r}({\bar x}).
$$
Let    ${\theta}> 0$ and    ${\eta}> 0$  such that
$$
{\theta}< \frac{1}{2}, \qquad \qquad {\eta}<\frac{1-\sqrt{2{\theta}}}{\Gamma\|f'({\bar x})\|\left(1+\sqrt{2{\theta}}\right)}.
$$
Assume that  ${\bar x}\in C$ and   $f({\bar x}) = 0$.    Then,   there exists   $0<\delta\leq r$  such that  every sequence $\{x_k\}$  generated by Algorithm~\ref{Alg:INP}  starting in $x_0 \in C \cap B_\delta({\bar x})\backslash \{{\bar x}\}$,  with  $0\leq \theta_k< {\theta}$ and  $0\leq \eta_k < {\eta}$ for all $k=0, 1, \ldots$, belongs to $ B_\delta({\bar x})\cap C$ and converges $Q$-linearly to ${\bar x}$.  As a consequence, if  $\lim_{k \to +\infty} \theta_k =0$ and $ \lim_{k \to +\infty} \eta_k = 0$, then $\{x_k\}$ converges $Q$-superlinearly to ${\bar x}$.  In addition, if  $\eta_k <  \min \{ {\eta}\|f(x_k)\|^{\mu}, \eta\}$  and $\theta_k < \min \{{\theta}\|f(x_k)\|^{2\mu}, \theta\}$, then the convergence of $\{x_k\}$ to ${\bar x}$ is  of the order of $1 + \mu$.
\end{theorem}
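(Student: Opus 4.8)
The plan is to show that Theorem~\ref{th:mainHolder} is a direct specialization of Theorem~\ref{th:mainInex}, with Lemmas~\ref{lem:Holder1} and \ref{lem:Holder2} supplying the two structural estimates that the radial H\"older hypothesis is designed to produce. Since $f$ is continuously differentiable, $\partial f(x) = \{f'(x)\}$ at every point, so the regularity requirement of Theorem~\ref{th:mainInex} reduces to nonsingularity of $f'$ and the quantity $\lambda_{\bar x}$ is precisely $\|f'({\bar x})^{-1}\|$.

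First I would invoke Lemma~\ref{lem:Holder1}: fixing any ${\hat r}$ obeying \eqref{eq:delta6}, it guarantees that $f'(x)$ is nonsingular on $B_{\hat r}({\bar x})$ and that the resolvent bound \eqref{eq:fsecA1} holds there with the given $L$ and $\mu$. Setting $\hat\delta := \min\{{\hat r}, r\}$ keeps us inside the region where the Lipschitz bound on $f$ is valid while preserving the bound on $\|f'(x)^{-1}\|$. Then Lemma~\ref{lem:Holder2}, applied on $B_{\bar r}({\bar x}) \supseteq B_{\hat\delta}({\bar x})$, yields the $(1+\mu)$-order residual estimate \eqref{eq:ssecA1} with constant $\mu L/(1+\mu)$. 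At this point every pointwise hypothesis of Theorem~\ref{th:mainInex} is in force on $B_{\hat\delta}({\bar x})$.

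Next I would match the scalar conditions. The assumptions $\theta < 1/2$ and the upper bound imposed on $\eta$ coincide with condition \eqref{eq:EtaT2} of Theorem~\ref{th:mainInex}. It then remains to exhibit a radius $\delta$ satisfying \eqref{eq:DeltaT2}, and I would simply take any positive $\delta$ below the minimum displayed there. That minimum is positive because \eqref{eq:EtaT2} forces the numerator $(1+\mu)[(1-\sqrt{2\theta}) - \eta\Gamma\|f'({\bar x})^{-1}\|(1+\sqrt{2\theta})]$ to be positive, while the denominator is positive since $L, \|f'({\bar x})^{-1}\| > 0$ and $1 + 2\mu - \sqrt{2\theta} > 0$ (here $\sqrt{2\theta} < 1$ as $\theta < 1/2$). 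With such a $\delta$ fixed, Theorem~\ref{th:mainInex} applies verbatim and delivers the containment $\{x_k\} \subset B_\delta({\bar x}) \cap C$, the $Q$-linear convergence, the $Q$-superlinear upgrade when $\theta_k, \eta_k \to 0$, and the order-$(1+\mu)$ rate under the sharper tolerance choices $\eta_k < \min\{\eta\|f(x_k)\|^{\mu}, \eta\}$ and $\theta_k < \min\{\theta\|f(x_k)\|^{2\mu}, \theta\}$.

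Because the argument is purely a reduction to the parent theorem, no step is genuinely difficult; the only point demanding care is the bookkeeping of radii, namely choosing $\hat\delta = \min\{{\hat r}, r\}$ so that Lemma~\ref{lem:Holder1}, Lemma~\ref{lem:Holder2}, and the Lipschitz bound are simultaneously available, together with the verification that \eqref{eq:DeltaT2} admits a positive solution. Both are immediate from the hypotheses, so the proof amounts to quoting the two lemmas and then citing Theorem~\ref{th:mainInex}.
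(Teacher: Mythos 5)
Your proposal is correct and follows essentially the same route as the paper: apply Lemma~\ref{lem:Holder1} and Lemma~\ref{lem:Holder2} on $B_{\hat\delta}(\bar x)$ with $\hat\delta=\min\{\hat r,r\}$ to verify \eqref{eq:fsecA1} and \eqref{eq:ssecA1}, then choose $\delta$ satisfying \eqref{eq:DeltaT2} and invoke Theorem~\ref{th:mainInex}. Your added check that the right-hand side of \eqref{eq:DeltaT2} is positive is a detail the paper leaves implicit, but it is the same argument.
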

\begin{proof}
Let $0<{\hat r}<1/[(L{\|f'(\bar x})^{-1}\|)^{1/{\mu}}]$ and  $0<{\hat \delta}\leq\min  \{ {\hat r}, {r}\}$.  Then, from Lemmas~\ref{lem:Holder1} and~\ref{lem:Holder2},  we conclude that $f$ satisfies \eqref{eq:fsecA1} and \eqref{eq:ssecA1} in $B_{\hat \delta}({\bar x})$.   Therefore,    $f$ satisfies all conditions of Theorem~\ref{th:mainInex} and by taking  $\delta> 0$ satisfying  \eqref{eq:DeltaT2}, the result follows.
\end{proof}

In the following  remark, we show that with some adjustments, Theorem~\ref{th:mainHolder}  has as particular instances some well-known results.
\begin{remark}
It is worth mentioning that  if $C = {\mathbb R}^n$ and  $\eta_k = \theta_k = 0$ for all $k = 0,1,\ldots$, then Theorem~\ref{th:mainHolder}  reduces to the second part of  \cite[Proposition~1.4.1, p.~90]{Bertsekas1999}.  If the  procedure to obtain the feasible inexact projection is the CondG procedure and  $\eta_k = 0$ for all $k = 0,1,\ldots$, then  Theorem~\ref{th:mainHolder} reduces to \cite[Theorem~7]{MaxJefferson2017}.  Finally, if the  procedure to obtain the feasible inexact projection is the CondG procedure, then with some adjustments  Theorem~\ref{th:mainHolder} reduces to \cite[Corollary 2]{GoncalvesOliveira2017}.
\end{remark}

\section{Numerical experiments} \label{NunEx}
In this section, we report some numerical experiments that show the computational feasibility of the inexact Newton method with
feasible exact projections (\textit{inexact Newton-ExP method}) and  inexact Newton method with feasible inexact projections (\textit{inexact Newton-InexP method}) on one class  CAVEs.  It is worth mentioning that works  dealing with the Newton method to solve  absolute value equation (AVE) include \cite{BelloCruz2016,Mangasarian2009}. The CAVE is described as
$$
\mbox{find} \quad x \in C \quad \mbox{such that}\quad Ax - |x| = b,
$$
where $C := \{x \in \mathbb{R}^n:~ \sum_{i = 1}^{n}x_i \leq d, \, x_i \geq 0, \, i = 1,2, \ldots,n\}$, $A \in \mathbb{R}^{n\times n}$, $b \in \mathbb{R}^n \equiv \mathbb{R}^{n\times 1}$, and $|x|$ denotes the vectors whose $i$-th component is equal to $|x_i|$.

In our implementation, the CAVEs have been generated randomly. We used the Matlab routine \textit{sprand} to construct matrix $A$. In particular, this routine generates a sparse matrix with predefined dimension, density, and singular values. Initially, we defined the dimension $n$ and randomly generated the vector of singular values from a uniform distribution on $(0, 1)$. To ensure that $\|A^{-1}\|< 1/3$, i.e., so that the assumptions of \cite[Theorem 2]{BelloCruz2016} are fulfilled, we rescale the vector of singular values by multiplying it by $3$ divided by the minimum singular value multiplied by a random number in the interval $(0, 1)$. To generate the vector $b$ and the constant $d$, we chose a random solution $x_*$ from a uniform distribution on $(0.1, 300)$ and computed $b = Ax_* - |x_*|$ and $d = \sum_{i = 1}^{n}(x_*)_i$, where $(x_*)_i$ denotes the $i$-th component of the vector $x_*$. In both methods, $x_0 = (d/2n, d/2n,\ldots, d/2n)$ was defined as the starting point, the initialization data $\theta$ was taken equal to $10^{-1}$ and $10^{-8}$ for the methods with inexact and exact projection, respectively, and $\eta$ was taken equal to $0.9999[(1-\sqrt{2\theta})/0.5\Gamma(1+\sqrt{2\theta})]$ with  $\Gamma = \|A\| + 1$. We stopped the execution of Algorithm~\ref{Alg:INP} at $x_k$, declaring convergence if
$
\|Ax_k - |x_k| - b\| < 10^{-6}.
$
In case this stopping criterion was not respected, the method stopped when a maximum of $50$ iterations had been performed.  The procedure to obtain feasible projections used in our implementation was the \textit{CondG Procedure}; see, for example, \cite{GoncalvesOliveira2017}. In particular, this procedure stopped when either the stopping criterion, i.e., the condition $\langle y_k - x_{k+1}, z - x_{k+1}\rangle \leq  \theta_k\|y_k - x_k\|^2$ was satisfied for all $z \in C$ and $k = 0, 1, \ldots$ or a maximum of $100$ iterations was performed. For this class of problems, an element of the Clarke generalized Jacobian at $x$ (see \cite{BelloCruz2016,Mangasarian2009}) is given by
$$
V = A -  \mbox{diag}(\mbox{sgn}(x)), \qquad x \in  \mathbb{R}^n,
$$
where $\mbox{diag}(\alpha_i)$ denotes a diagonal matrix with diagonal elements $\alpha_1, \alpha_2, \ldots, \alpha_n$ and $\mbox{sgn}(x)$ denotes a vector with components equal to $-1$, $0$, or $1$ depending on whether the corresponding component of the vector $x$ is negative, zero, or positive. The inexact Newton-ExP and inexact Newton-InexP  methods requires the linear system $f(x_k)+ V_k(y_k - x_k) = 0$ to be solved approximately, in the sense of \eqref{eq:semsnew}. Matlab has several iterative methods for solving linear equations. For our class of problems, the routine \textit{lsqr} was the most efficient; thus, in all tests, we used \textit{lsqr} as an iterative method to solve linear equations approximately. In particular, this routine is an algorithm for sparse linear equations and sparse least squares; for further details, see, for example, \cite{Paige1982}. We compare the efficiency and robustness of the methods using the performance profiles graphics, see \cite{Dolan2002}. The efficiency is related to the percentage of problems for which the method was the fastest, whereas robustness is related to the percentage of problems for which the method found a solution. In a performance profile, efficiency and robustness can be accessed on the extreme left (at 1 in domain) and right of the graphic, respectively. The numerical results were obtained using Matlab version R2016a on a 2.5~GHz Intel\textregistered\ Core\texttrademark\ i5 2450M computer with 6~GB of RAM and Windows 7 ultimate system and are freely available from https://orizon.mat.ufg.br/admin/pages/11432-codes.

Figure~\ref{fig1} reports a comparison, using performance profiles, between the inexact Newton-ExP and inexact Newton-InexP  methods for solving CAVEs of dimensions $1000$, $5000$, $8000$, and $10000$. We generated $200$ CAVEs with dimensions $1000$ and $5000$, and $100$ CAVEs with dimensions $8000$ and $10000$. The density of the matrix $A$ was taken equal to $0.003$, as well as in \cite{BelloCruz2016}. This means that only about $0.3\%$ of the elements of $A$ are nonnull. To obtain the CPU time more accurately, we run each test problem 10 times and we defined the corresponding CPU time as the median of these measurements. Analyzing Figure~\ref{fig1}, we see that the inexact Newton-InexP method is more efficient than the inexact Newton-ExP method on the set of test problems. In particular, the efficiencies of the inexact Newton method with the exact and inexact projections are, respectively, $30.5\%$ and $69.5\%$ for problems of dimension $1000$, $31.0\%$ and $69.0\%$ for problems of dimension $5000$, $41.0\%$ and $59.0\%$ for problems of dimension $8000$, and $30.0\%$ and $70.0\%$ for problems of dimension $10000$. Thus, we can conclude that for this class of test problem the parameter $\theta$ and consequently $\eta$ given in \eqref{eq:DeltaEtaT1} limit the effectiveness of the method.

\begin{table}[h]\caption{Performance of the inexact Newton-ExP method versus the inexact Newton-InexP method}\label{tab1}
{\tiny
\resizebox{\textwidth}{!}{
\begin{tabular}{lllllllllll}
\hline\noalign{\smallskip}
  &   \multicolumn{3}{l}{Inexact Newton-ExP method} & \multicolumn{3}{l}{Inexact Newton-InexP method} \\
\noalign{\smallskip}\hline\noalign{\smallskip}
Dimension   & $\%$ & Iter & Time & $\%$ & Iter &Time \\
\noalign{\smallskip}\hline
\\
1000  & 100.0 & $6.61$ & $0.58$ & 100.0 & $5.50$ & $0.56$
\\
5000  & 100.0 & $6.70$ & $11.67$ & 100.0 & $5.67$ & $11.48$
\\
8000  & 100.0 & $6.90$ & 30.76 & 100.0 & $5.81$ & $30.42$
\\
10000  & 100.0& $6.88$ & $46.66$ & 100.0 & $5.77$ & $45.27$
\\
\noalign{\smallskip}\hline
\end{tabular}}}
\end{table}

{\footnotesize
\begin{figure}[h!]
\centering
\subfloat[$n = 1000$]{
\includegraphics[width=0.514\textwidth]{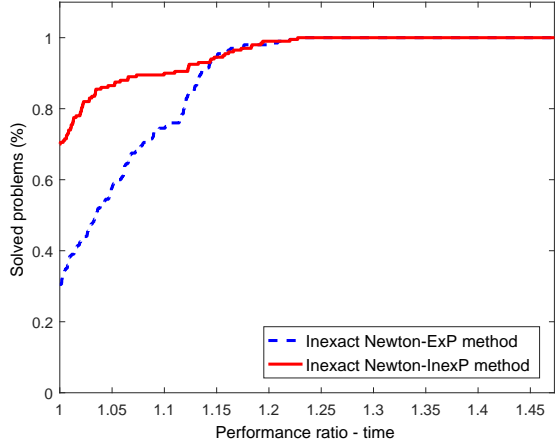}}
\subfloat[$n = 5000$]{
\includegraphics[width=0.514\textwidth]{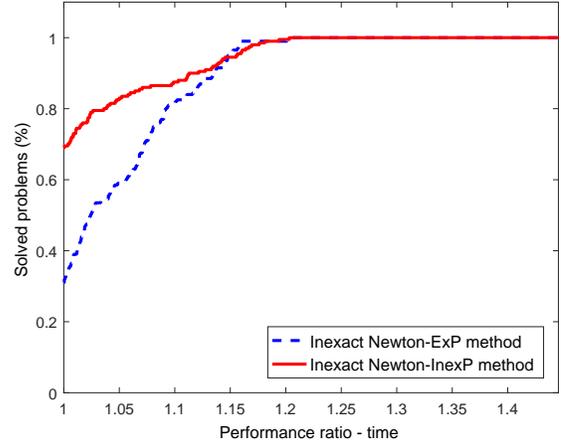}}
\\
\subfloat[$n = 8000$]{
\includegraphics[width=0.514\textwidth]{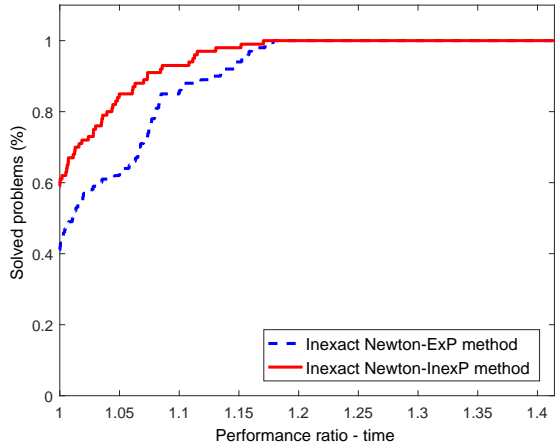}}
\subfloat[$n = 10000$]{
\includegraphics[width=0.514\textwidth]{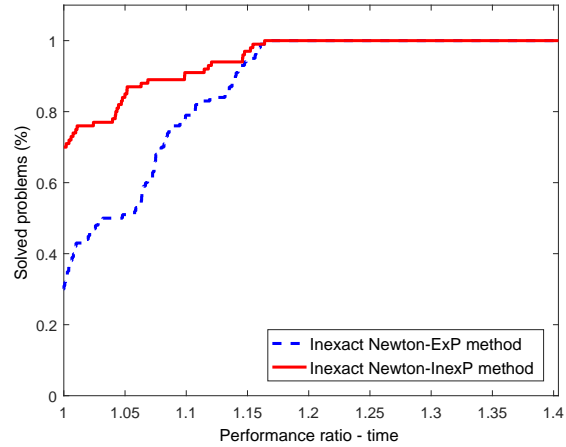}}
\caption{Performance profile comparing the inexact Newton-ExP method versus the inexact Newton-InexP method for CAVEs using CPU time as the performance measurement.}
\label{fig1}
\end{figure}
}

Table~\ref{tab1} lists, for each method, the percentage of problems solved ``$\%$'', the average numbers of iterations ``Iter'', and the average times in seconds ``Time''. As can be seen, the robustness is $100.0\%$ for both methods. The average numbers of iterations is approximately $7$ and $6$ for the exact and inexact versions, respectively. Moreover, with respect to the average time, it is possible to observe a certain trend, that is, as the dimension of the problem increases, the performance of the inexact Newton-InexP method becomes better compared with the inexact Newton-ExP method.

The results discussed above allow us to conclude that there are problems for which the use of the inexact projection can become the more efficient method. Thus, we can say that the inexact Newton-InexP method may be a robust and efficient tool for solving other classes of nonsmooth functions subject to a set of constraints.

\section{Conclusions} \label{Sec:Conclusions}
We know that to solve nonlinear equations, the Newton method is the starting point for designing many more sophisticated  methods, including the Gauss--Newton method,  Levenberg--Marquardt method, trust region method, and several other variants;  see \cite{DennisSchnabel1983} for a comprehensive study on this subject.  Therefore, the study of new properties of this method is important in itself.  In this paper, we have proposed a  new scheme for solving constrained smooth and nonsmooth equations, the essence of which was to combine the exact/inexact Newton method with a feasible inexact projection. In Theorems~\ref{th:mainss} and \ref{th:mainHolder} we have shown that,  under mild assumptions, the exact/inexact Newton-InexP method for solving constrained smooth and nonsmooth equations preserves the local convergence properties if  feasible inexact projections with  suitable error relative tolerance are used. In particular, under the standard nonsingularity condition, the superlinear/Q-quadratic rate is preserved. In this sense, we expect that our results  become a first step towards a study of the behavior  of the Newton method  and  aforementioned variants,  with feasible inexact projections,  under more reasonable regularity conditions. It is worth mentioning that  inexact projection, as proposed in \cite{BehlingFischerHerrichIusemYe2014}, is in general infeasible and  cheaper than the one  proposed in our paper. However,  as far as we know,  even under nonsingularity conditions, there are no results showing that the Newton method or any of its  variants for solving constrained nonsmooth equations  allowing infeasible inexact projections maintains  its convergence local properties, namely superlinear and/or Q-quadratic rate.  We think this question  should be investigated, though we believe that  infeasible  inexact projections should limit the convergence rate of the method as a whole.

Finally, to show the practical behavior of the proposed method, we have tested it on some medium- and large-scale CAVEs. The numerical experiments have shown that the inexact Newton-InexP method works quite well for solving this class of problems.


\end{document}